\DeclarePairedDelimiter{\ceil}{\lceil}{\rceil}
\newcommand{\R}{\mathbb{R}}
\newcommand{\N}{\mathbb{N}}
\newcommand{\gs}{\sigma}
\newcommand{\gz}{\zeta}
\newcommand{\gO}{\Omega}
\newcommand{\cA}{\mathcal{A}}
\newcommand{\cF}{\mathcal{F}}
\newcommand{\Op}{\operatorname{O}}
\DeclareMathOperator{\E}{\mathbb{E}} 
\DeclareMathOperator{\Var}{\mathsf{Var}} 
\newcommand{\dd}{\mathrm{d}}
\newcommand{\bigE}[1]{\E\left[#1\right]}
\newcommand{\smallE}[1]{\E[#1]}
\newcommand{\inpro}[3]{ \left\langle #1 , #2 \right\rangle_{#3} }
\newcommand{\hsspacetwo}[2]{ L_{\textsc{HS}}( #1 ; #2 ) }
\newcommand{\norm}[2]{\| #1 \|_{#2}}
\newcommand{\bignorm}[2]{\left|\left| #1 \right|\right|_{#2}}
\newcommand{\KL}{Karhunen--Lo\`eve }
\newtheorem{lemma}{Lemma}[section]
\newtheorem{proposition}[lemma]{Proposition}
\newtheorem{theorem}[lemma]{Theorem}
\newtheorem{corollary}[lemma]{Corollary}
\theoremstyle{remark}
\theoremstyle{definition}
\newtheorem{assumption}[lemma]{Assumption}
\begin{document}
\title[SPDE weak error simulation by MC vs.\ MLMC]{
Monte Carlo versus multilevel Monte Carlo in weak error simulations of SPDE approximations
}

\author[A.~Lang]{Annika Lang} \address[Annika Lang]{\newline Department of Mathematical Sciences
\newline Chalmers University of Technology \& University of Gothenburg
\newline S--412 96 G\"oteborg, Sweden.} \email[]{annika.lang@chalmers.se}

\author[A.~Petersson]{Andreas Petersson} \address[Andreas Petersson]{\newline Department of Mathematical Sciences
\newline Chalmers University of Technology \& University of Gothenburg
\newline S--412 96 G\"oteborg, Sweden.} 
\email[]{andreas.petersson@chalmers.se}

\thanks{
Acknowledgement. 
The authors wish to express many thanks to Stig Larsson, Christoph Schwab, and two anonymous referees 
for fruitful discussions and helpful comments.
The work was supported in part by the Swedish Research Council under Reg.~No.~621-2014-3995 as well as the Knut and Alice Wallenberg foundation. The simulations were performed on resources at Chalmers Centre for Computational Science and Engineering (C3SE) provided by the Swedish National Infrastructure for Computing (SNIC)
}
\subjclass{65C05, 60H15, 41A25, 65C30, 65N30}
\keywords{(multilevel) Monte Carlo methods, variance reduction techniques, error simulation, stochastic partial differential equations, weak convergence, upper and lower error bounds}
 
\begin{abstract}
The simulation of the expectation of a stochastic quantity $\E[Y]$ by Monte Carlo methods is known to be computationally expensive especially if the stochastic quantity or its approximation~$Y_n$ is expensive to simulate, e.g., the solution of a stochastic partial differential equation. If the convergence of $Y_n$ to $Y$ in terms of the error $|\E[Y - Y_n]|$ is to be simulated, this will typically be done by a Monte Carlo method, i.e., $|\E[Y] - E_N[Y_n]|$ is computed. In this article upper and lower bounds for the additional error caused by this are determined and compared to those of $|E_N[Y - Y_n]|$, which are found to be smaller. Furthermore, the corresponding results for multilevel Monte Carlo estimators, for which the additional sampling error converges with the same rate as $|\E[Y - Y_n]|$, are presented. Simulations of a stochastic heat equation driven by multiplicative Wiener noise and a geometric Brownian motion are performed which confirm the theoretical results and show the consequences of the presented theory for weak error simulations.
\end{abstract}

\maketitle
\section{Introduction}
Weak error analysis for approximations of solutions of stochastic partial differential equations (SPDEs for short) is one of the topics that is currently under investigation within the community of numerical analysis of SPDEs. The goal of weak error analysis is to quantify how well we can approximate a quantity of interest that depends on the solution of an SPDE. While weak convergence rates for equations driven by additive noise are already available (see, e.g., \cite{DP09,AKL16,KLS15,BHS16} and references therein), convergence rates for fully discrete approximations of SPDEs driven by multiplicative noise are still under consideration. First results for semi-discrete approximations in space or time are available (cf., e.g., \cite{Debussche,AL16,CJK14,JK15}) that suggest that one can, as in the case of additive noise, expect a weak convergence rate of twice the order of the strong one, i.e., of mean square convergence. Nevertheless, the simulation of weak error rates has caused problems so far and results are rarely available. First attempts can be found in~\cite{Kruse,Lang4}. 

There are several factors that cause problems in the simulation of weak error rates of SPDE approximations but one of the main reasons is the computational complexity of simulating the solution on a fine grid. To give estimates on quantities of interest, which include the approximation of an expectation, the computational complexity is multiplied by the number of samples in Monte Carlo type methods that are necessary to obtain a reasonable result. We observed in practice that the simulated weak errors in our model problem are very small which in turn requires a far from realistic number of samples in Monte Carlo simulations to get acceptable results.

Motivated by this model problem, we look into the properties of the used estimators. While we are interested in simulating the error $|\E[Y - Y_n]|$ for a sequence of approximations $(Y_n)_{n \in \N_0}$ converging to the real-valued random variable~$Y$, the quantity~$\E[Y_n]$ is analytically not available. The standard approach is to approximate the expectation by a Monte Carlo estimator $E_N[Y_n]$. We show in this manuscript that the additional error when using the estimator $|\E[Y] - E_N[Y_n]|$ instead of the original error is for small errors essentially bounded from above and below by $N^{-1/2}$ in mean square, where $N$ denotes the number of Monte Carlo samples. Furthermore, we consider the estimator $|E_N[Y - Y_n]|$ instead and show that the bounds improve to $N^{-1/2} \Var[Y-Y_n]^{1/2}$, i.e., the number of samples is multiplied by the variance of the error, which can be seen as the strong error in the context of SPDE approximations. Finally, we substitute the Monte Carlo estimators by the corresponding multilevel Monte Carlo estimators and show that the additional error decreases to $|\E[Y - Y_n]|$, i.e., to the error that we are interested in. We confirm the theoretical results in simulations of weak errors of the stochastic heat equation driven by multiplicative noise and a geometric Brownian motion. With the new estimators, we are to the best of our knowledge the first to be able to show weak convergence rates for an SPDE driven by multiplicative noise in simulations.

This manuscript is organized as follows.
In Section~\ref{sec:MLMC-error} we recall Monte Carlo (MC for short) and multilevel Monte Carlo (MLMC) estimators for real-valued random variables. Upper and lower bounds for the approximation of $|\E[Y-Y_n]|$ by different Monte Carlo type estimators are shown.
A short review on SPDEs and their approximation in space, time, and noise is given in Section~\ref{sec:SPDE} and available convergence results are recalled.
Finally, simulation results of strong and weak errors using the estimators introduced in Section~\ref{sec:MLMC-error} for the stochastic heat equation driven by multiplicative noise and a geometric Brownian motion are shown in Section~\ref{sec:simulation}. These confirm the theoretical results of Section~\ref{sec:MLMC-error}.

\section{Monte Carlo versus multilevel Monte Carlo in error analysis}\label{sec:MLMC-error}

In this section we consider upper and lower bounds for the sampling errors that arise when performing weak error simulations in practice. It turns out that it is not surprising that it has not been possible so far to numerically implement weak error analysis for approximations of SPDEs driven by multiplicative noise. Nevertheless, this section is not SPDE specific but formulated more generally for real-valued, square integrable random variables. Quantities of interest in SPDE applications are examples of the framework considered in this section.

Let $(\gO,\cA,P)$ be a probability space and let us for $p \ge 1$ denote by $L^p(\gO;\R)$ the space of all real-valued random variables~$Y$ such that $\|Y\|_{L^p(\gO;\R)}^p = \E[|Y|^p] < + \infty$. We recall that the \emph{Monte Carlo estimator~$E_N$} of a real-valued random variable $Y: \gO \rightarrow \R$ is given by
  \begin{equation}\label{eq:MC-estimator}
   E_N[Y] = N^{-1} \sum_{i=1}^N Y^{(i)},
  \end{equation}
where $(Y^{(i)})_{i=1}^{N}$ is a sequence of independent, identically distributed random variables that have the same law as~$Y$.
Furthermore,  the \emph{multilevel Monte Carlo estimator $E^L$} of a sequence of random variables $(Y_\ell)_{\ell \in \N_0}$ is defined by
  \begin{equation}\label{eq:MLMC-estimator}
   E^L[Y_L] = E_{N_0}[Y_0] + \sum_{\ell=1}^L E_{N_\ell}[Y_\ell - Y_{\ell-1}]
  \end{equation}
for $L \in \N$, where $(N_\ell)_{\ell=0}^{L}$ consists of level specific numbers of samples in the Monte Carlo estimators. For more details on multilevel Monte Carlo methods the reader is referred to the large literature starting with~\cite{H01,G08}.

For later estimates we present the following well-known property of a Monte Carlo estimator, which is a specific form of the law of large numbers and can for example be found in~\cite[Lemma 4.1]{Lang3}.
\begin{lemma}
\label{lem:LLN}
For $N \in \N$ and for $Y \in L^2(\Omega;\R)$ it holds that
\begin{equation*}
\norm{\E\left[Y\right]-E_N[Y]}{L^2(\Omega;\R)} = \frac{1}{\sqrt{N}} \Var[Y]^{1/2} \le \frac{1}{\sqrt{N}} \norm{Y}{L^2(\Omega;\R)}.
\end{equation*}
\end{lemma}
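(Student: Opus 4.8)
The plan is to recognize this as a direct computation of the mean square error of an unbiased estimator. First I would observe that since the samples $(Y^{(i)})_{i=1}^N$ share the law of $Y$, linearity of expectation gives $\E[E_N[Y]] = N^{-1}\sum_{i=1}^N \E[Y^{(i)}] = \E[Y]$, so the Monte Carlo estimator is unbiased. Consequently the quantity inside the norm has mean zero, and
$$\norm{\E[Y] - E_N[Y]}{L^2(\Omega;\R)}^2 = \E\left[\left(E_N[Y] - \E[E_N[Y]]\right)^2\right] = \Var[E_N[Y]].$$

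The key step is then to evaluate $\Var[E_N[Y]]$ by expanding the square and exploiting independence. Writing $E_N[Y] - \E[Y] = N^{-1}\sum_{i=1}^N (Y^{(i)} - \E[Y])$, the expectation of the product of two distinct centred factors factorises and vanishes by independence, so only the $N$ diagonal terms survive:
$$\Var[E_N[Y]] = N^{-2}\sum_{i=1}^N \Var[Y^{(i)}] = N^{-2} \cdot N \cdot \Var[Y] = \frac{1}{N}\Var[Y].$$
Taking square roots yields the claimed identity $\norm{\E[Y] - E_N[Y]}{L^2(\Omega;\R)} = N^{-1/2}\Var[Y]^{1/2}$.

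Finally, for the upper bound I would use that the variance never exceeds the second moment: $\Var[Y] = \E[Y^2] - (\E[Y])^2 \le \E[Y^2] = \norm{Y}{L^2(\Omega;\R)}^2$, and the inequality follows upon multiplying by $N^{-1/2}$ and taking square roots. I do not expect any genuine obstacle here, as the statement is a classical second-moment computation; the only point requiring care is the vanishing of the off-diagonal cross terms, which relies on the independence (not merely the identical distribution) of the samples.
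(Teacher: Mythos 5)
Your proof is correct. Note that the paper does not actually prove this lemma at all --- it simply cites it as a known result (Lemma 4.1 in the referenced work of Lang et al.); your argument (unbiasedness, then the Bienaym\'e identity $\Var[E_N[Y]] = N^{-1}\Var[Y]$ via vanishing of the off-diagonal cross terms by independence, then $\Var[Y] \le \E[Y^2]$) is exactly the standard computation that citation stands for, so there is nothing to add.
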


From now on let us consider a square integrable random variable $Y: \gO \rightarrow \R$, i.e., ${Y \in L^2(\gO;\R)}$, and a sequence of approximations $(Y_n)_{n \in \N_0}$ of~$Y$. We assume that it is known that $(Y_n)_{n \in \N_0}$ converges to~$Y$ in the sense that 
  \begin{equation*}
   \lim_{n \rightarrow \infty} |\E[Y-Y_n]| = 0.
  \end{equation*}
In order to estimate convergence rates, one is interested in the simulation of $|\E[Y-Y_n]|$, which usually cannot be done exactly but has to be approximated. If one were 
interested in estimating $\E[Y-Y_n]$, the method of \emph{common random numbers} would tell us that when $Y$ and $Y_n$ are positively correlated, it is better to use an estimator of the form $E_N[Y-Y_n]$ rather than $\E[Y]- E_N[Y_n]$, since the latter has higher variance and both are unbiased. Now, when estimating $|\E[Y-Y_n]|$, the estimators become
   $|\E[Y] - E_N[Y_n]|$
and
   $|E_N[Y-Y_n]|$
instead, neither of which is in general unbiased. In the following lemmas, we therefore show upper and lower bounds on the sampling errors
  \begin{equation*}
    \left\| |\E[Y-Y_n]| - |\E[Y] - E_N[Y_n]|\right\|_{L^2(\gO;\R)}
  \end{equation*}
and
  \begin{equation*}
   \left\| |\E[Y-Y_n]| - |E_N[Y-Y_n]|\right\|_{L^2(\gO;\R)},
  \end{equation*}
 in mean square sense.

\begin{proposition}\label{prop:MC-simple-error-bound}
 The sampling error of approximating $|\E[Y-Y_n]|$ by $|\E[Y] - E_N[Y_n]|$ is bounded from below by
 \begin{equation*}
  \| |\E[Y - Y_n]| - |\E[Y] - E_N[Y_n]| \|_{L^2(\gO;\R)}
    \ge -|\E[Y-Y_n]| + \left(|\E[Y-Y_n]|^2 + N^{-1} \Var[Y_n]\right)^{1/2}
 \end{equation*}
 and from above by
 \begin{equation*}
  \| |\E[Y - Y_n]| - |\E[Y] - E_N[Y_n]| \|_{L^2(\gO;\R)}
    \le N^{-1/2} (\Var[Y_n])^{1/2}.
 \end{equation*}
\end{proposition}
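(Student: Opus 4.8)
The plan is to isolate the single source of randomness in the estimator. Writing $\E[Y] - E_N[Y_n] = \E[Y-Y_n] + (\E[Y_n] - E_N[Y_n])$, I would set $a = \E[Y-Y_n] \in \R$, which is deterministic, and $Z = \E[Y_n] - E_N[Y_n]$, which is a mean-zero random variable satisfying $\E[Z^2] = N^{-1}\Var[Y_n]$ by Lemma~\ref{lem:LLN}. The quantity to be bounded then reduces to $\norm{|a| - |a+Z|}{L^2(\gO;\R)}$, and both bounds become statements about this one-parameter expression.

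For the upper bound I would simply invoke the reverse triangle inequality $\big| |a| - |a+Z| \big| \le |Z|$, which holds pointwise, and take $L^2(\gO;\R)$-norms on both sides. By Lemma~\ref{lem:LLN} applied to $Y_n$, the right-hand side equals $N^{-1/2}\Var[Y_n]^{1/2}$, which is exactly the claimed bound.

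For the lower bound I would expand the square. Using $|a+Z|^2 = (a+Z)^2$ together with $\E[Z]=0$, one finds
\begin{equation*}
 \E\big[(|a| - |a+Z|)^2\big] = 2a^2 + \E[Z^2] - 2|a|\,\E[|a+Z|].
\end{equation*}
The decisive step is a single application of Cauchy--Schwarz (equivalently Jensen) in the form $\E[|a+Z|] \le (\E[(a+Z)^2])^{1/2} = (a^2 + \E[Z^2])^{1/2}$; inserting this yields
\begin{equation*}
 \E\big[(|a| - |a+Z|)^2\big] \ge 2a^2 + \E[Z^2] - 2|a|\,(a^2 + \E[Z^2])^{1/2}.
\end{equation*}
The point to stress is that the right-hand side is a perfect square, namely $\big(-|a| + (a^2 + \E[Z^2])^{1/2}\big)^2$. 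Taking square roots, noting that $(a^2 + \E[Z^2])^{1/2} \ge |a|$ so the bracket is nonnegative, and substituting back $a = \E[Y-Y_n]$ and $\E[Z^2] = N^{-1}\Var[Y_n]$, gives precisely the stated lower bound.

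The main obstacle, though a mild one, is orienting the Cauchy--Schwarz estimate correctly: since the term $-2|a|\,\E[|a+Z|]$ enters with a negative sign, I need an \emph{upper} bound on $\E[|a+Z|]$ in order to obtain a \emph{lower} bound on the error, and I must then recognize the resulting expression as a perfect square so that the square root can be taken cleanly. Everything else is the routine decomposition above and an appeal to Lemma~\ref{lem:LLN}.
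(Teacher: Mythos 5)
Your proof is correct, and for the lower bound it takes a genuinely different (and slightly cleaner) finish than the paper. The upper bound is exactly the paper's argument: pointwise reverse triangle inequality followed by Lemma~\ref{lem:LLN}. For the lower bound, both you and the paper start from the same decomposition $\E[Y]-E_N[Y_n] = a + Z$ with $a=\E[Y-Y_n]$ deterministic and $Z = \E[Y_n]-E_N[Y_n]$ centred with $\E[Z^2]=N^{-1}\Var[Y_n]$, and both expand the square to reach
\begin{equation*}
E_{n,N}^2 := \E\bigl[(|a|-|a+Z|)^2\bigr] = 2a^2 + \E[Z^2] - 2|a|\,\E[|a+Z|],
\end{equation*}
so everything hinges on how the cross term is estimated. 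The paper applies H\"older to the \emph{difference}: $\E[|a+Z|]-|a| \le \E\bigl[\bigl||a+Z|-|a|\bigr|\bigr] \le E_{n,N}$, which makes the unknown $E_{n,N}$ reappear on the right-hand side and yields the quadratic inequality $E_{n,N}^2 + 2|a|E_{n,N} - \E[Z^2] \ge 0$, from which the bound follows by solving for the nonnegative root. You instead apply Cauchy--Schwarz to $|a+Z|$ itself, $\E[|a+Z|] \le (a^2+\E[Z^2])^{1/2}$, and recognize the resulting lower bound $2a^2+\E[Z^2]-2|a|(a^2+\E[Z^2])^{1/2}$ as the perfect square $\bigl((a^2+\E[Z^2])^{1/2}-|a|\bigr)^2$, after which taking square roots (legitimate, as you note, since $(a^2+\E[Z^2])^{1/2}\ge|a|$ makes the bracket nonnegative) gives the claim directly. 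Both routes spend one application of Cauchy--Schwarz and arrive at identical bounds, but yours avoids the self-referential estimate and the need to solve a quadratic inequality; it also transfers verbatim to Propositions~\ref{prop:MC-all-error-bound}, \ref{prop:MLMC-simple-error-bound} and \ref{prop:MLMC-all-error-bound}, since in each case only the value of $\E[Z^2]$ changes, which is precisely how the paper reuses its own template.
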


\begin{proof}
 To prove the proposition let us first observe that
  \begin{align*}
   \E[(\E[Y] &- E_N[Y_n])^2]\\
    & = \E[(\E[Y-Y_n] + (\E[Y_n] - E_N[Y_n]))^2]\\
    & = (\E[Y-Y_n])^2 + \E[(\E[Y_n] - E_N[Y_n])^2] + 2 \E[Y-Y_n] \E[\E[Y_n] - E_N[Y_n]],
  \end{align*}
 which implies with Lemma~\ref{lem:LLN} and since $E_N[Y_n]$ is an unbiased estimator of $\E[Y_n]$ that
 \begin{equation*}
  \E[(\E[Y] - E_N[Y_n])^2]
    = |\E[Y-Y_n]|^2 + N^{-1} \Var[Y_n].
 \end{equation*}
 Using this observation we obtain for the squared sampling error that
 \begin{align*}
  \| |\E[Y - Y_n]| &- |\E[Y] - E_N[Y_n]| \|_{L^2(\gO;\R)}^2\\
    & = \E\left[ |\E[Y - Y_n]|^2 + |\E[Y] - E_N[Y_n]|^2 - 2 |\E[Y - Y_n]| \cdot |\E[Y] - E_N[Y_n]| \right]\\
    & = 2 |\E[Y - Y_n]|^2 + N^{-1} \Var[Y_n] - 2 |\E[Y - Y_n]| \E[|\E[Y] - E_N[Y_n]|]\\
    & = N^{-1} \Var[Y_n] - 2 |\E[Y - Y_n]| \E \left[ |\E[Y] - E_N[Y_n]|  - |\E[Y - Y_n]|\right].
 \end{align*}
 To find the lower bound, we observe that
 \begin{align*}
  \E \left[|\E[Y] - E_N[Y_n]| -  |\E[Y - Y_n]|\right]
    & \le \E \left[\left||\E[Y] - E_N[Y_n]| -  |\E[Y - Y_n]|\right|\right] \\
    & \le \E \left[ (|\E[Y - Y_n]| - |\E[Y] - E_N[Y_n]|)^2\right]^{1/2}
 \end{align*}
 by the properties of the expectation and H\"older's inequality.
 Setting
 \begin{equation*}
  E_{n,N} = \| |\E[Y - Y_n]| - |\E[Y] - E_N[Y_n]| \|_{L^2(\gO;\R)},
 \end{equation*}
 which is a positive quantity,
 we therefore obtain the 
 inequality
 \begin{equation*}
  E_{n,N}^2 + 2 |\E[Y-Y_n]| E_{n,N} - N^{-1} \Var[Y_n] \ge 0.
 \end{equation*}
 This is solved using the non-negativity of $E_{n,N}$ by
 \begin{equation*}
  E_{n,N} \ge -|\E[Y-Y_n]| + \left(|\E[Y-Y_n]|^2 + N^{-1} \Var[Y_n]\right)^{1/2},
 \end{equation*}
 which finishes the proof of the lower bound.
 
 For the upper bound we apply the reverse triangle inequality to obtain that
 \begin{equation*}
  \| |\E[Y - Y_n]| - |\E[Y] - E_N[Y_n]| \|_{L^2(\gO;\R)}^2
    \le \|\E[Y_n] - E_N[Y_n]\|_{L^2(\gO;\R)}^2
    = N^{-1} \Var[Y_n],
 \end{equation*}
 where the last step follows from Lemma~\ref{lem:LLN}.
\end{proof}

Having shown that for ${|\E[Y-Y_n]|\ll N^{-1/2}}$ the sampling error is essentially bounded from below and above by $N^{-1/2}$ in terms of the number of Monte Carlo samples when simulating $|\E[Y] - E_N[Y_n]|$, we continue with the sampling error for $|E_N[Y-Y_n]|$. It turns out that this decays for a fixed number of Monte Carlo samples with the square root of the variance of $Y-Y_n$.

\begin{proposition}\label{prop:MC-all-error-bound}
 The sampling error of approximating $|\E[Y-Y_n]|$ by $|E_N[Y-Y_n]|$ is bounded from below by
 \begin{align*}
  \| |\E[Y - Y_n]| &- |E_N[Y-Y_n]| \|_{L^2(\gO;\R)}\\
    & \ge -|\E[Y-Y_n]| + \left(|\E[Y-Y_n]|^2 + N^{-1} \Var[Y-Y_n]\right)^{1/2}
 \end{align*}
 and from above by
 \begin{equation*}
  \| |\E[Y - Y_n]| - |E_N[Y-Y_n]| \|_{L^2(\gO;\R)}
    \le N^{-1/2} (\Var[Y-Y_n])^{1/2}.
 \end{equation*}
\end{proposition}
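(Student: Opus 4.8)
The plan is to mirror the proof of Proposition~\ref{prop:MC-simple-error-bound} almost verbatim, the key observation being that $E_N[Y-Y_n]$ is an unbiased estimator of the \emph{full} target quantity $\E[Y-Y_n]$. Treating $Y-Y_n$ as a single square integrable random variable and applying Lemma~\ref{lem:LLN} to it directly gives $\norm{\E[Y-Y_n] - E_N[Y-Y_n]}{L^2(\gO;\R)}^2 = N^{-1} \Var[Y-Y_n]$. This is exactly the role that $\norm{\E[Y_n] - E_N[Y_n]}{L^2(\gO;\R)}^2 = N^{-1} \Var[Y_n]$ played in the previous proof, so the whole argument should carry over with $\Var[Y_n]$ replaced everywhere by $\Var[Y-Y_n]$.

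Concretely, I would first decompose $E_N[Y-Y_n] = \E[Y-Y_n] + (E_N[Y-Y_n] - \E[Y-Y_n])$ and square it; since the estimator is unbiased, the cross term vanishes in expectation and Lemma~\ref{lem:LLN} yields $\E[(E_N[Y-Y_n])^2] = |\E[Y-Y_n]|^2 + N^{-1} \Var[Y-Y_n]$. Substituting this into the expansion of $\norm{|\E[Y-Y_n]| - |E_N[Y-Y_n]|}{L^2(\gO;\R)}^2$ and using that $|\E[Y-Y_n]|$ is deterministic, the constant terms partially cancel and leave $N^{-1}\Var[Y-Y_n] - 2|\E[Y-Y_n]|\,\E[|E_N[Y-Y_n]| - |\E[Y-Y_n]|]$. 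For the lower bound I would then control the remaining expectation by the properties of the expectation and H\"older's inequality, so that $\E[|E_N[Y-Y_n]| - |\E[Y-Y_n]|] \le \norm{|\E[Y-Y_n]| - |E_N[Y-Y_n]|}{L^2(\gO;\R)}$. Abbreviating this $L^2$-norm by $E_{n,N}$, this produces the quadratic inequality $E_{n,N}^2 + 2|\E[Y-Y_n]| E_{n,N} - N^{-1} \Var[Y-Y_n] \ge 0$, whose non-negative root gives the stated lower bound. The upper bound follows at once from the reverse triangle inequality combined with Lemma~\ref{lem:LLN} applied to $Y-Y_n$.

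I expect no genuine obstacle here, since the content is in effect a single change of variable from $Y_n$ to $Y-Y_n$. The one point deserving a moment's care is the justification that Lemma~\ref{lem:LLN} may be invoked for the composite variable $Y-Y_n$ (which requires only $Y-Y_n \in L^2(\gO;\R)$, guaranteed since both $Y$ and $Y_n$ lie in $L^2(\gO;\R)$), so that the full variance $\Var[Y-Y_n]$ rather than $\Var[Y_n]$ appears. This is precisely the improvement over Proposition~\ref{prop:MC-simple-error-bound} advertised in the surrounding discussion, as $\Var[Y-Y_n]$ plays the role of the strong error and typically vanishes as $n \to \infty$.
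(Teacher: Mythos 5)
Your proposal is correct and follows essentially the same route as the paper: both mirror the proof of Proposition~\ref{prop:MC-simple-error-bound}, with the only substantive change being the identity $\E[(E_N[Y-Y_n])^2] = |\E[Y-Y_n]|^2 + N^{-1}\Var[Y-Y_n]$, and both obtain the upper bound from the reverse triangle inequality together with Lemma~\ref{lem:LLN} applied to $Y-Y_n$. The paper derives that identity via the variance decomposition and the Bienaym\'e formula rather than by citing Lemma~\ref{lem:LLN} directly, but since the lemma is itself just a restatement of Bienaym\'e for the unbiased estimator, this is a cosmetic difference, not a different argument.
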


\begin{proof}
 The proof of the lower bound is performed in the same way as that of Proposition~\ref{prop:MC-simple-error-bound}. The only difference is that we have to simplify $\E[(E_N[Y-Y_n])^2]$ instead of $\E[(\E[Y] - E_N[Y_n])^2]$, which we do in what follows. 
 Therefore, let us observe that due to the properties of the variance and the unbiasedness of the Monte Carlo estimator
 \begin{align*}
  \Var[E_N[Y-Y_n]] 
    & = \E[(E_N[Y-Y_n])^2] - (\E[E_N[Y-Y_n]])^2\\
    & = \E[(E_N[Y-Y_n])^2] - (\E[Y-Y_n])^2.
 \end{align*}
 The independence of the random variables in the Monte Carlo estimator and the Bienaym\'e formula imply that
 \begin{equation*}
  \Var[E_N[Y-Y_n]] 
    = N^{-1} \Var[Y-Y_n],
 \end{equation*}
 which overall leads to
 \begin{equation*}
  \E[(E_N[Y-Y_n])^2]
    = N^{-1} \Var[Y-Y_n] + |\E[Y-Y_n]|^2.
 \end{equation*}
 The proof of the lower bound is finished by applying this formula in the proof of Proposition~\ref{prop:MC-simple-error-bound} and calculating accordingly.
 
 For the upper bound we observe that
  \begin{align*}
   \| |\E[Y - Y_n]| - |E_N[Y-Y_n]| \|_{L^2(\gO;\R)}^2
    & \le \|\E[Y - Y_n] - E_N[Y-Y_n]\|_{L^2(\gO;\R)}^2\\
    & = N^{-1} \Var[Y-Y_n]
  \end{align*}
 again by the reverse triangle inequality and Lemma~\ref{lem:LLN}, which finishes the proof.
\end{proof}

We first remark that the upper bounds in Proposition~\ref{prop:MC-simple-error-bound} and Proposition~\ref{prop:MC-all-error-bound} were already obtained in the context of weak errors for SPDE approximations in~\cite[Proposition~5.4]{P15}.

From the upper bound in Proposition~\ref{prop:MC-simple-error-bound} we learn that the sampling error will not be worse than the Monte Carlo error. At the same time, under the assumption that the quantity of interest ${|\E[Y-Y_n]|\ll N^{-1/2}}$ is very small, we also see that the lower bound implies that we are not able to do better. Therefore, the sampling error is essentially bounded from below and above by the Monte Carlo error. This is not surprising but proves how heavily the simulation relies on the number of Monte Carlo samples for small errors $|\E[Y-Y_n]|$. For relatively cheap computations of samples of $Y_n$ for arbitrarily large $n \in \N$, this is no problem. Nevertheless, in our context, where $Y$ and $Y_n$ are functionals of the solution to an SPDE and its approximation, respectively, the computation is very expensive and the errors $|\E[Y-Y_n]|$ are usually very small compared to their variance. Therefore, it is of no surprise that weak error simulations for SPDEs are still missing in the literature or that many people have failed to simulate them by Monte Carlo methods. One example of a failure with the standard estimator is shown in Section~\ref{sec:simulation}.

Looking into Proposition~\ref{prop:MC-all-error-bound}, we see that we obtain similar upper and lower bounds but instead of $(\Var[Y_n])^{1/2}$, the Monte Carlo error $N^{-1/2}$ is multiplied by $(\Var[Y-Y_n])^{1/2}$, which is usually smaller than $\Var[Y_n]^{1/2}$ and also decrease in~$n$ if $(Y_n)_{n \in \N_0}$ is a sequence of approximations that converges in $L^2(\gO;\R)$ to~$Y$. Therefore we expect faster convergence for the estimator in Proposition~\ref{prop:MC-all-error-bound} than in Proposition~\ref{prop:MC-simple-error-bound}. This is tested and confirmed in Section~\ref{sec:simulation} for a stochastic heat equation driven by multiplicative noise and a geometric Brownian motion.

As a second step in the error analysis, we now consider upper and lower bounds for the sampling error if multilevel Monte Carlo estimators are used instead of the corresponding singlelevel ones, which we discussed above. The results are obtained in a similar way as before.

\begin{proposition}\label{prop:MLMC-simple-error-bound}
 The sampling error of approximating $|\E[Y-Y_L]|$ by $|\E[Y]-E^L[Y_L]|$ is bounded from below by
 \begin{align*}
  \| |\E[Y &- Y_L]| - |\E[Y] - E^L[Y_L]| \|_{L^2(\gO;\R)}\\
    & \ge -|\E[Y-Y_L]| + \left(|\E[Y-Y_L]|^2 + N_0^{-1} \Var[Y_0] + \sum_{\ell=1}^L N_\ell^{-1} \Var[Y_\ell - Y_{\ell-1}]\right)^{1/2}
 \end{align*}
 and from above by
 \begin{equation*}
  \| |\E[Y - Y_L]| - |\E[Y] - E^L[Y_L]| \|_{L^2(\gO;\R)}
    \le \left(N_0^{-1} \Var[Y_0] + \sum_{\ell=1}^L N_\ell^{-1} \Var[Y_\ell - Y_{\ell-1}]\right)^{1/2}.
 \end{equation*}
\end{proposition}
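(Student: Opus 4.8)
The plan is to follow the argument of Proposition~\ref{prop:MC-simple-error-bound} almost verbatim, the only genuinely new ingredient being the mean and variance of the multilevel estimator~$E^L[Y_L]$. Abbreviating
\begin{equation*}
  V_L = N_0^{-1} \Var[Y_0] + \sum_{\ell=1}^L N_\ell^{-1} \Var[Y_\ell - Y_{\ell-1}],
\end{equation*}
I would first establish that $\E[E^L[Y_L]] = \E[Y_L]$ and $\Var[E^L[Y_L]] = V_L$. Unbiasedness follows from the linearity of the expectation together with the unbiasedness of each single-level estimator in~\eqref{eq:MLMC-estimator}, since the telescoping sum $\E[Y_0] + \sum_{\ell=1}^L \E[Y_\ell - Y_{\ell-1}]$ collapses to $\E[Y_L]$. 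For the variance I would use that the Monte Carlo estimators on distinct levels are constructed from independent samples, so that the Bienaym\'e formula reduces the variance of the sum in~\eqref{eq:MLMC-estimator} to the sum of the level-wise variances, each of which Lemma~\ref{lem:LLN} evaluates to give~$V_L$.

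With this in hand the remainder is mechanical. Expanding $\E[Y] - E^L[Y_L] = \E[Y-Y_L] + (\E[Y_L] - E^L[Y_L])$ and using unbiasedness yields
\begin{equation*}
  \E[(\E[Y] - E^L[Y_L])^2] = |\E[Y-Y_L]|^2 + V_L,
\end{equation*}
which is precisely the single-level identity with $N^{-1}\Var[Y_n]$ replaced by~$V_L$. Substituting this into the expansion of the squared sampling error gives
\begin{equation*}
  E_L^2 = V_L - 2 |\E[Y-Y_L]| \, \E\big[ |\E[Y] - E^L[Y_L]| - |\E[Y-Y_L]| \big],
\end{equation*}
where I write $E_L = \| |\E[Y-Y_L]| - |\E[Y] - E^L[Y_L]| \|_{L^2(\gO;\R)}$.

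For the lower bound I would bound the bracketed expectation from above by $E_L$ using $\E[Z] \le \E[|Z|]$ followed by H\"older's inequality, exactly as in Proposition~\ref{prop:MC-simple-error-bound}; since the coefficient $-2|\E[Y-Y_L]|$ is nonpositive, this produces the quadratic inequality $E_L^2 + 2|\E[Y-Y_L]| E_L - V_L \ge 0$, whose nonnegative root is the claimed lower bound. For the upper bound the reverse triangle inequality gives $E_L^2 \le \|\E[Y_L] - E^L[Y_L]\|_{L^2(\gO;\R)}^2 = \Var[E^L[Y_L]] = V_L$. The only step deserving genuine care is the cross-level independence entering the variance computation: although the two samples inside a single correction term $E_{N_\ell}[Y_\ell - Y_{\ell-1}]$ are deliberately coupled, the samples used on different levels are drawn independently, so that all cross-covariances vanish and the Bienaym\'e formula applies; once this is granted, everything transfers directly from the single-level case.
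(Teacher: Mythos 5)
Your proposal is correct and follows essentially the same route as the paper: both proofs reduce everything to the identity $\E[(\E[Y]-E^L[Y_L])^2] = |\E[Y-Y_L]|^2 + N_0^{-1}\Var[Y_0] + \sum_{\ell=1}^L N_\ell^{-1}\Var[Y_\ell - Y_{\ell-1}]$, obtained from unbiasedness and cross-level independence, then recycle the quadratic-inequality argument of Proposition~\ref{prop:MC-simple-error-bound} for the lower bound and the reverse triangle inequality for the upper bound. The only cosmetic difference is that you package the key computation as $\Var[E^L[Y_L]]$ via the Bienaym\'e formula, whereas the paper writes out the level-wise mean-square errors directly; these are the same calculation.
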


\begin{proof}
 The lower bound is again proven in the same way as in Proposition~\ref{prop:MC-simple-error-bound}, where the only difference is in the computation of $\E[(\E[Y] - E^L[Y_L])^2]$, which we include for completeness. We obtain by the independence of the Monte Carlo estimators on different levels, its unbiasedness, and by Lemma~\ref{lem:LLN} that
 \begin{align*}
  \E[(&\E[Y] - E^L[Y_L])^2]\\
    & = (\E[Y-Y_L])^2 + \E[(\E[Y_0] - E_{N_0}[Y_0])^2] + \sum_{\ell=1}^L \E[(\E[Y_\ell - Y_{\ell-1}] - E_{N_\ell}[Y_\ell - Y_{\ell-1}])^2]\\
    & = |\E[Y-Y_L]|^2 + N_0^{-1} \Var[Y_0] + \sum_{\ell=1}^L N_\ell^{-1} \Var[Y_\ell - Y_{\ell-1}].
 \end{align*}
 For the upper bound we apply after the reverse triangle inequality the same arguments as in the previous computation which yield
 \begin{align*}
  \| |\E[Y - Y_L]| - |\E[Y] - E^L[Y_L]| \|_{L^2(\gO;\R)}^2
    & \le \| \E[Y_L] - E^L[Y_L] \|_{L^2(\gO;\R)}^2\\
    & = N_0^{-1} \Var[Y_0] + \sum_{\ell=1}^L N_\ell^{-1} \Var[Y_\ell - Y_{\ell-1}].
 \end{align*}
 This finishes the proof of the proposition.
\end{proof}

Here the performance of the upper and lower bound depends on the choice of the sample sizes for the different levels of the multilevel Monte Carlo estimator. In Theorem~1 in~\cite{Lang4} it is assumed that there are upper bounds for $|\E[Y-Y_\ell]|$ and $\Var[Y_\ell - Y_{\ell-1}]$. If we assume that we know the errors exactly, we can set $a_\ell = |\E[Y-Y_\ell]|$ and $a_\ell^{2\eta}= \Var[Y_\ell - Y_{\ell-1}]$ in the notation of that theorem in~\cite{Lang4} and choose the sample sizes accordingly. This is made precise in the following corollary that states that the correct choice of samples leads to a sampling error of the same size up to a constant as $|\E[Y-Y_L]|$ which we would like to observe.

\begin{corollary}\label{cor:MLMC-simple-error-bound-with-samples}
 Choosing for a fixed level~$L \in \N$ the sample sizes in the multilevel Monte Carlo estimator
 $N_0 = \ceil{|\E[Y-Y_L]|^{-2}}$ and
 $N_\ell = \ceil{|\E[Y-Y_L]|^{-2} \Var[Y_\ell - Y_{\ell-1}] \ell^{1+\epsilon}}$ for $\ell=1,\ldots,L$ and some $\epsilon > 0$
 it holds that
 \begin{equation*}
  \| |\E[Y - Y_L]| - |\E[Y] - E^L[Y_L]| \|_{L^2(\gO;\R)}
    \ge ( 2^{-1/2}(3 + \Var[Y_0])^{1/2} - 1) |\E[Y-Y_L]|
 \end{equation*}
 and
 \begin{equation*}
  \| |\E[Y - Y_L]| - |\E[Y] - E^L[Y_L]| \|_{L^2(\gO;\R)}
    \le (\Var[Y_0] + \gz(1+\epsilon))^{1/2} |\E[Y-Y_L]|,
 \end{equation*}
 where $\gz$ denotes the Riemann zeta function.
 Therefore,
 \begin{equation*}
  \| |\E[Y - Y_L]| - |\E[Y] - E^L[Y_L]| \|_{L^2(\gO;\R)}
    \simeq  |\E[Y-Y_L]|,
 \end{equation*}
 i.e., the sampling error converges with the same rate as $|\E[Y-Y_L]|$.
\end{corollary}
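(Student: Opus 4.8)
The plan is to derive both bounds directly from Proposition~\ref{prop:MLMC-simple-error-bound} by inserting the prescribed sample sizes and controlling the ceilings through the elementary inequalities $x \le \ceil{x} \le x+1$. To lighten the notation I would abbreviate $\gd = |\E[Y-Y_L]|$ and write $S = N_0^{-1}\Var[Y_0] + \sum_{\ell=1}^L N_\ell^{-1}\Var[Y_\ell - Y_{\ell-1}]$, so that the two bounds of Proposition~\ref{prop:MLMC-simple-error-bound} read $-\gd + (\gd^2 + S)^{1/2}$ from below and $S^{1/2}$ from above. Everything then reduces to estimating $S$ from above and below, after which the lower bound requires only the algebraic simplification of the resulting expression.

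For the upper bound I would use only $\ceil{x} \ge x$. With $N_0 \ge \gd^{-2}$ this gives $N_0^{-1}\Var[Y_0] \le \gd^2 \Var[Y_0]$, and with $N_\ell \ge \gd^{-2}\Var[Y_\ell - Y_{\ell-1}]\ell^{1+\epsilon}$ the variance factors cancel to leave $N_\ell^{-1}\Var[Y_\ell - Y_{\ell-1}] \le \gd^2\ell^{-(1+\epsilon)}$. Summing and bounding the tail of the series by $\sum_{\ell=1}^\infty \ell^{-(1+\epsilon)} = \gz(1+\epsilon)$ yields $S \le \gd^2(\Var[Y_0] + \gz(1+\epsilon))$, and taking square roots gives the claimed upper bound. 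This step is completely clean and needs no assumption on the size of $\gd$.

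For the lower bound I would instead use $\ceil{x} \le x+1$. The delicate point is that I want the reverse cancellation $N_\ell^{-1}\Var[Y_\ell - Y_{\ell-1}] \ge \tfrac12 \gd^2\ell^{-(1+\epsilon)}$, and with $x = \gd^{-2}\Var[Y_\ell - Y_{\ell-1}]\ell^{1+\epsilon}$ this amounts to $\tfrac{x}{x+1} \ge \tfrac12$, which holds only when $x \ge 1$; hence I would work in the regime where each ceiling argument is at least one, i.e.\ where $\gd \le 1$ and $\gd^{-2}\Var[Y_\ell - Y_{\ell-1}]\ell^{1+\epsilon} \ge 1$. This is precisely the weak-error-much-smaller-than-variance regime of interest. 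Under it one has $N_0^{-1}\Var[Y_0] \ge \tfrac12\gd^2\Var[Y_0]$, the corresponding bound holds termwise for the sum, and keeping only the $\ell=1$ term of $\sum_{\ell=1}^L \ell^{-(1+\epsilon)} \ge 1$ gives $S \ge \tfrac12\gd^2(1 + \Var[Y_0])$. Substituting this into $-\gd + (\gd^2 + S)^{1/2}$ and simplifying $\gd^2 + \tfrac12\gd^2(1+\Var[Y_0]) = \tfrac12\gd^2(3 + \Var[Y_0])$ reproduces the stated lower constant $2^{-1/2}(3+\Var[Y_0])^{1/2} - 1$.

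The equivalence $\simeq$ then follows by observing that both constants are strictly positive — the lower one because $2^{-1/2}\cdot 3^{1/2} > 1$ already, so it stays positive even for $\Var[Y_0]=0$ — and, crucially, independent of $L$: the upper one is controlled uniformly by $\gz(1+\epsilon)$ and the lower one does not grow with $L$ either. The main obstacle is the ceiling handling in the lower bound, since $\ceil{x} \le 2x$ fails for $x < 1$; making the smallness hypothesis explicit (or treating the finitely many degenerate levels, where the ceiling argument is below one, separately) is the only nontrivial bookkeeping, whereas the upper bound and the final comparison are routine.
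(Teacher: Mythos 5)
Your proof is correct and follows essentially the same route as the paper: both insert the prescribed sample sizes into Proposition~\ref{prop:MLMC-simple-error-bound}, control the ceilings (the paper via $x \le \ceil{x} \le 2x$ for $x \ge 1$, you via $\ceil{x} \le x+1$ together with the requirement $x \ge 1$, which is the same condition), bound the partial sum $\sum_{\ell=1}^L \ell^{-(1+\epsilon)}$ between $1$ and $\gz(1+\epsilon)$, and simplify to the stated constants. The only difference is one of bookkeeping: you make explicit the regime ($|\E[Y-Y_L]| \le 1$ and $\Var[Y_\ell - Y_{\ell-1}]\,\ell^{1+\epsilon} \ge |\E[Y-Y_L]|^2$) in which the ceiling arguments are at least one, a hypothesis the paper's proof uses implicitly when it invokes $\ceil{x} \le 2x$ for $x \ge 1$.
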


\begin{proof}
 Let us first observe that for $x \ge 1$ we have that $x \le \ceil{x} \le 2 x$. This implies with the given choices of $N_\ell$, $\ell=0,\ldots,L$, that
 \begin{equation*}
  N_0^{-1} \Var[Y_0] + \sum_{\ell=1}^L N_\ell^{-1} \Var[Y_\ell - Y_{\ell-1}]
    \le |\E[Y-Y_L]|^2 \left(\Var[Y_0] + \sum_{\ell=1}^L \ell^{-(1+\epsilon)}\right)
 \end{equation*}
 as well as
 \begin{equation*}
  N_0^{-1} \Var[Y_0] + \sum_{\ell=1}^L N_\ell^{-1} \Var[Y_\ell - Y_{\ell-1}]
    \ge 2^{-1} |\E[Y-Y_L]|^2 \left(\Var[Y_0] + \sum_{\ell=1}^L \ell^{-(1+\epsilon)}\right).
 \end{equation*}
 We observe next that
 \begin{equation*}
  1 \le \sum_{\ell=1}^L \ell^{-(1+\epsilon)} \le \gz(1+\epsilon)
 \end{equation*}
 for all $L \in \N$ and plug the obtained inequalities into the equations in Proposition~\ref{prop:MLMC-simple-error-bound} to finish the proof of the corollary.
\end{proof}
For completeness we include the equivalent statement to Proposition~\ref{prop:MC-all-error-bound} for the multilevel Monte Carlo estimator, but we remark that it is of no practical interest. This is due to the fact that in particular $E_{N_0}[Y-Y_0]$ has to be computed, i.e., many samples of the exact solution must be generated, which is computationally too expensive and destroys the idea of multilevel Monte Carlo methods.
\begin{proposition}\label{prop:MLMC-all-error-bound}
 The sampling error of approximating $|\E[Y-Y_L]|$ by $|E^L[Y-Y_L]|$ is bounded from below by
 \begin{align*}
  \| |\E[Y &- Y_L]| - |E^L[Y-Y_L]| \|_{L^2(\gO;\R)}\\
    & \ge -|\E[Y-Y_L]| + \left(|\E[Y-Y_L]|^2 +  N_0^{-1} \Var[Y-Y_0] + \sum_{\ell=1}^L N_\ell^{-1} \Var[Y_\ell - Y_{\ell-1}]\right)^{1/2}
 \end{align*}
 and from above by
 \begin{equation*}
  \| |\E[Y - Y_L]| - |E^L[Y-Y_L]| \|_{L^2(\gO;\R)}
    \le \left( N_0^{-1} \Var[Y-Y_0] + \sum_{\ell=1}^L N_\ell^{-1} \Var[Y_\ell - Y_{\ell-1}]\right)^{1/2}.
 \end{equation*}
\end{proposition}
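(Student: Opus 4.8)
The plan is to mirror the proofs of Proposition~\ref{prop:MLMC-simple-error-bound} and Proposition~\ref{prop:MC-all-error-bound}, since the only genuinely new ingredient is the evaluation of the second moment $\E[(E^L[Y-Y_L])^2]$. First I would read $E^L[Y-Y_L]$ as the multilevel estimator~\eqref{eq:MLMC-estimator} applied to the sequence $Z_\ell = Y - Y_\ell$, so that $Z_0 = Y-Y_0$ and the telescoping increments are $Z_\ell - Z_{\ell-1} = -(Y_\ell - Y_{\ell-1})$. This gives
\begin{equation*}
 E^L[Y-Y_L] = E_{N_0}[Y-Y_0] - \sum_{\ell=1}^L E_{N_\ell}[Y_\ell - Y_{\ell-1}],
\end{equation*}
and in particular, by the unbiasedness of each level estimator and telescoping, $E^L[Y-Y_L]$ is an unbiased estimator of $\E[Y-Y_L]$.

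Next I would compute the second moment. Using the independence of the Monte Carlo estimators across levels, the Bienaym\'e formula, the identity $\Var[-(Y_\ell - Y_{\ell-1})] = \Var[Y_\ell - Y_{\ell-1}]$, and Lemma~\ref{lem:LLN}, one obtains
\begin{equation*}
 \Var[E^L[Y-Y_L]] = N_0^{-1}\Var[Y-Y_0] + \sum_{\ell=1}^L N_\ell^{-1}\Var[Y_\ell - Y_{\ell-1}].
\end{equation*}
Combined with unbiasedness this yields
\begin{equation*}
 \E[(E^L[Y-Y_L])^2] = |\E[Y-Y_L]|^2 + N_0^{-1}\Var[Y-Y_0] + \sum_{\ell=1}^L N_\ell^{-1}\Var[Y_\ell - Y_{\ell-1}].
\end{equation*}
Note that, in contrast to Proposition~\ref{prop:MLMC-simple-error-bound}, the level-zero contribution is $\Var[Y-Y_0]$ rather than $\Var[Y_0]$, which is exactly the term that forces samples of the exact solution $Y$ and renders the estimator impractical.

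From here the lower bound follows verbatim from the argument in Proposition~\ref{prop:MC-simple-error-bound}: expanding the squared sampling error and inserting the second-moment identity reduces it to $N_0^{-1}\Var[Y-Y_0] + \sum_{\ell=1}^L N_\ell^{-1}\Var[Y_\ell - Y_{\ell-1}] - 2|\E[Y-Y_L]|\,\E[\,|E^L[Y-Y_L]| - |\E[Y-Y_L]|\,]$, and bounding the cross term by H\"older's inequality (together with the unbiasedness $\E[E^L[Y-Y_L]] = \E[Y-Y_L]$) produces a quadratic inequality in the nonnegative quantity $\| |\E[Y-Y_L]| - |E^L[Y-Y_L]| \|_{L^2(\gO;\R)}$. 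Solving it gives the stated lower bound. For the upper bound I would apply the reverse triangle inequality followed by the variance computation above, namely
\begin{equation*}
 \| |\E[Y-Y_L]| - |E^L[Y-Y_L]| \|_{L^2(\gO;\R)}^2 \le \| \E[Y-Y_L] - E^L[Y-Y_L] \|_{L^2(\gO;\R)}^2 = \Var[E^L[Y-Y_L]],
\end{equation*}
which equals the claimed sum.

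There is no serious obstacle here; the proof is a routine adaptation of the earlier ones. The only point requiring care is the bookkeeping in the second-moment computation---in particular correctly interpreting $E^L[Y-Y_L]$ as the multilevel estimator of the sequence $(Y-Y_\ell)_{\ell}$, keeping track of the sign in the increments $-(Y_\ell - Y_{\ell-1})$ (which is harmless for the variances), and recognizing that the level-zero term becomes $\Var[Y-Y_0]$.
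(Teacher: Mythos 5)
Your proposal is correct and follows essentially the same route as the paper: the paper's proof likewise reduces everything to the scheme of Proposition~\ref{prop:MC-simple-error-bound}, with the key ingredient being exactly the second-moment identity $\E[(E^L[Y-Y_L])^2] = |\E[Y-Y_L]|^2 + N_0^{-1}\Var[Y-Y_0] + \sum_{\ell=1}^L N_\ell^{-1}\Var[Y_\ell - Y_{\ell-1}]$ that you derive. In fact you supply more detail than the paper does (the level-wise decomposition with increments $-(Y_\ell - Y_{\ell-1})$, unbiasedness, and the Bienaym\'e computation), since the paper leaves those steps to the reader.
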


\begin{proof}
 The proof is again performed in the same way as that of Proposition~\ref{prop:MC-simple-error-bound}, where it is essential to derive
 \begin{align*}
  \E[(E^L[Y-Y_L])^2]
    = |\E[Y-Y_L]|^2 + N_0^{-1} \Var[Y-Y_0] + \sum_{\ell=1}^L N_\ell^{-1} \Var[Y_\ell - Y_{\ell-1}].
 \end{align*}
 Due to the repetition in techniques and the rather theoretical nature of the claim we leave further details of the proof to the interested reader.
\end{proof}

\section{Approximation of mild SPDE solutions}
\label{sec:SPDE}

In this section we employ the framework of \cite{Kruse} in a simplified setting and recall some of the results of that monograph. We provide a noise approximation result for a stochastic evolution equation with multiplicative noise in the very end of this section.
Let $H=L^2([0,1];\R)$ be the space of square integrable functions on the unit interval $[0,1]$ with inner product~$\inpro{\cdot}{\cdot}{H}$ given by $\inpro{v}{w}{H}=\int^1_0 v(x)w(x) \, \dd x$, which is a real separable Hilbert space with orthonormal basis~$(e_j)_{j \in \N}$,
where $e_j(x) = \sqrt{2} \sin(j \pi x) $. 
Let $Q \in L(H)$, where $L(H)$ is the space of all bounded linear operators from $H$ to $H$, be a self-adjoint, positive definite operator of trace class. We denote by $H_0 = Q^{1/2}(H)$ the Hilbert space with inner product $\inpro{\cdot}{\cdot}{H_0}~=~\langle Q^{-1/2}\cdot, Q^{-1/2}\cdot\rangle_{H}$,
where $Q^{-1}$ denotes the pseudo-inverse of~$Q$. Furthermore, we let $(\Omega, \cA, (\cF)_{t \in [0,T]}, P)$ be the extension of the probability space in Section~\ref{sec:MLMC-error} with a normal filtration. We assume that $W=(W(t))_{t \ge 0}$ is an $(\cF)_{t \in [0,T]}$~-~adapted $Q$-Wiener process.
In this framework we consider for $t \in [0,T]$ the stochastic partial differential equation 
\begin{equation}
\label{eq:stochasticheat}
\dd X(t) - \Delta X(t) \, \dd t= G(X(t)) \, \dd W(t)
\end{equation}
with initial condition $X(0) = X_0$,
which we refer to as the one-dimensional heat equation driven by multiplicative Wiener noise. 
Here we denote by $\Delta$ the Laplace operator with zero boundary conditions. It holds that $-\Delta$ has eigenbasis $(e_j)_{j \in \N}$ with eigenvalues $\lambda_j = j^2 \pi^2$
and $\Delta$ generates a $C_0$-semigroup of contractions denoted by $S=(S(t))_{t\ge 0}$ on $H$. The fractional operator $(-\Delta)^{r/2} : \dot{H}^{r} \to H$ 
has domain $\dot{H}^{r}=\{ v \in H : \norm{v}{r}^2 = \sum^\infty_{j=1} \lambda_j^r \inpro{v}{e_j}{H}^2 \}$ for $r\ge0$. It holds that $\dot{H}^{r}$ is a separable Hilbert space when equipped with the inner product 
\begin{equation*}
\inpro{\cdot}{\cdot}{r} = \inpro{(-\Delta)^{\frac{r}{2}} \cdot}{(-\Delta)^{\frac{r}{2}} \cdot}{H} . 
\end{equation*}

We impose further assumptions on the parameters of~\eqref{eq:stochasticheat} in what follows, which are stronger than Assumptions 2.13 and 2.17 in~\cite[Chapter 2]{Kruse} and hence guarantee the existence and uniqueness of a mild solution 
\begin{equation}
\label{eq:mildsolution}
X(t) = S(t) X_0 + \int^t_0 S(t-s)G(X(s)) \, \dd W(s).
\end{equation}
\begin{assumption} Assume that the parameters of~\eqref{eq:stochasticheat} satisfy:
\label{assumptions:1}
\hfill
\begin{enumerate}[label=(\roman*)]
\item 
\label{assumptions:1:Q}
The trace class operator $Q$ is defined through the relation $Q e_j = \mu_j e_j$ where $\mu_j = C_\mu j^{-\eta}$ for two constants $C_\mu > 0$ and $\eta > 1$.
\item \label{assumptions:1:G} 
Fix a parameter $r \in [0,1)$. The mapping $G : H \to L_{\text{HS}}(H_0;H) $ satisfies for a constant $C>0$
\begin{enumerate}[label=(\alph*)]
\item  $G(v) \in L_{\textsc{HS}}(H_0;\dot{H}^r)$ for all $v \in \dot{H}^r$,
\item  $\norm{(-\Delta)^{r/2} G(v)}{L_\textsc{HS}(H_0;H)} \le C (1+ \norm{v}{r})$ for all $v \in \dot{H}^r$,
\item  $\norm{G(v_1)-G(v_2)}{L_\textsc{HS}(H_0;H)} \le C \norm{v_1-v_2}{H}$ for all $v_1,v_2 \in H$, and 
\item 
$\norm{G(v) e_j}{H} \le C (1+\norm{v}{H})$ for all basis vectors $e_j \in H$ and $v \in H$,
\end{enumerate}
where $L_{\text{HS}}(H_0;H)$ denotes the space of Hilbert--Schmidt operators from $H_0$ to~$H$.
\item  Assume that $X_0 \in \dot{H}^{1+r}$ is a deterministic initial value.
\end{enumerate}
\end{assumption}

We remark that in the notation of Assumption~\ref{assumptions:1}\ref{assumptions:1:Q},
we may write $W$ in terms of its \KL expansion
\begin{equation}
\label{eq:KLexpansion}
W(t) = \sum_{j=1}^{\infty} \mu_j^{\frac{1}{2}} \beta_j (t) e_j,
\end{equation}
where $(\beta_j)_{j \in \N}$ is a sequence of independent, real-valued Wiener processes.

In order to be able to simulate realizations of the mild solution~\eqref{eq:mildsolution}, we approximate it by a Galerkin finite element method in space and an implicit Euler--Maruyama scheme in time. For this, let $(V_h)_{h \in (0,1]}$ be the nested sequence of finite-dimensional subspaces, where $V_{h}\subset\dot{H}^1\subset H$ is given for each $h$ by
the family of continuous functions that are piecewise linear on the intervals $[x_j,x_{j+1}]$ of an equidistant partition $(x_j)_{j=0}^{N_h}$ of $[0,1]$ defined by $ x_j = j h$ for $j= 0,1 \ldots ,N_h$ and zero on the boundary, where $N_h=1/h$ is assumed to be an integer.

We define the discrete operator $-\Delta_h : V_h \to V_h$ on each $v_h \in V_h$ by letting $-\Delta_h v_h$ be the unique element of $V_h$ such that
$$
\inpro{-\Delta v_h}{w_h}{H} = \inpro{v_h}{w_h}{1} = \inpro{-\Delta_h v_h}{w_h}{H}
$$ 
for all $w_h \in V_h$. 

For the discretization in time we define a uniform time grid $(t_j)_{j=1}^{N_k}$ with time step size $k \in (0,1]$ by $t_j = jk$ for $j=0,1, \ldots, N_k$, where $N_k = T/k$ is again assumed to be an integer. To be able to implement this approximation scheme on a computer, one must also consider a noise approximation, i.e., an approximation of the $Q$-Wiener process~$W$. One way of doing this is to truncate the \KL expansion~\eqref{eq:KLexpansion}, which has earlier been considered for example in~\cite{KLL10,KLNS11,BL12,BL12_AMO,JR15}. This leads to the Wiener process
\begin{equation*}
W^\kappa(t) = \sum_{j=1}^{\kappa} \mu_j^{\frac{1}{2}} \beta_j (t) e_j.
\end{equation*}
The fully discrete implicit Euler-Maruyama approximation $X^j_{\kappa,h} $ of $X(t_j)$ is then given in recursive form by
\begin{equation}\label{eq:truncatedfullydiscretesolution}
X^j_{\kappa,h} - X^{j-1}_{\kappa,h} - k(\Delta_h X^j_{\kappa,h}) = P_h G(X^{j-1}_{\kappa,h}) (W^\kappa(t_j) - W^\kappa(t_{j-1}))
\end{equation}
for $j=1,\ldots,N_k$ with initial condition $X^0_h = P_h X_0$, where $P_h$ denotes the orthogonal projection onto~$V_h$.
This scheme converges strongly with $\Op(k^{1/2})$ to the mild solution, which is stated in the following theorem that combines \cite[Theorem 3.14]{Kruse} with an additional noise approximation.
\begin{theorem}
\label{thm:noiseapproximation}
Under Assumption~\ref{assumptions:1} for fixed $r \in [0,1)$ and the couplings $k^{1/2} \simeq h^{1+r}$, $\kappa \simeq h^{- \beta}$ with ${\beta(\eta -1)} = {2(1+r)}$, it holds that for all $p \ge 2$ there exists a constant $C>0$ such that for all $k \in (0,1]$ and $j=1, \ldots, N_k$ 
\begin{equation*}
\| X(t_j) - X_{\kappa,h}^j \|_{L^p(\Omega;H)} \le C k^{1/2}.
\end{equation*}
\end{theorem}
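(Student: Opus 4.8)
The plan is to split the error by the triangle inequality into the spatio-temporal discretization error, which is exactly the content of the cited \cite[Theorem~3.14]{Kruse}, and an additional noise-truncation error. Let $\tilde{X}^j_h$ be the fully discrete approximation defined precisely as in~\eqref{eq:truncatedfullydiscretesolution} but driven by the untruncated process $W$ rather than $W^\kappa$, with the same initial value $P_h X_0$. Then
\begin{equation*}
\norm{X(t_j) - X^j_{\kappa,h}}{L^p(\Omega;H)} \le \norm{X(t_j) - \tilde{X}^j_h}{L^p(\Omega;H)} + \norm{\tilde{X}^j_h - X^j_{\kappa,h}}{L^p(\Omega;H)}.
\end{equation*}
Under Assumption~\ref{assumptions:1} and the coupling $k^{1/2} \simeq h^{1+r}$ the first term is bounded by $C k^{1/2}$ by \cite[Theorem~3.14]{Kruse}, so everything reduces to showing that the noise-truncation error $\tilde{X}^j_h - X^j_{\kappa,h}$ is also of order $k^{1/2}$.

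To this end I would set $e^j = \tilde{X}^j_h - X^j_{\kappa,h}$ and write $R_{k,h} = (I - k\Delta_h)^{-1}$ for the discrete resolvent on $V_h$; since $-\Delta_h$ is self-adjoint and positive semidefinite on $V_h$, $R_{k,h}$ is a contraction on $V_h$. Subtracting the two recursions, solving for $e^j$, and iterating down to $e^0 = 0$ gives the discrete stochastic convolution
\begin{align*}
e^j &= \sum_{i=1}^j R_{k,h}^{\,j-i+1} P_h \big(G(\tilde{X}^{i-1}_h) - G(X^{i-1}_{\kappa,h})\big)\big(W(t_i) - W(t_{i-1})\big)\\
&\quad + \sum_{i=1}^j R_{k,h}^{\,j-i+1} P_h\, G(X^{i-1}_{\kappa,h})\big((W - W^\kappa)(t_i) - (W - W^\kappa)(t_{i-1})\big) =: I_1^j + I_2^j.
\end{align*}
Both sums are martingale transforms with $\cF_{t_{i-1}}$-measurable integrands and independent increments, so the It\^o isometry (for $p = 2$) and the Burkholder--Davis--Gundy inequality (for $p \ge 2$) apply term by term.

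For $I_2^j$ I would use that $R_{k,h}$ and $P_h$ are contractions and that the truncated increment $(W-W^\kappa)(t_i) - (W - W^\kappa)(t_{i-1})$ is centered Gaussian whose relevant Hilbert--Schmidt norm only involves the basis vectors $e_l$ with $l > \kappa$. Combining this with the bound $\norm{G(v)e_j}{H} \le C(1 + \norm{v}{H})$ from Assumption~\ref{assumptions:1}, the uniform moment bound $\sup_{i,k,h,\kappa}\norm{X^i_{\kappa,h}}{L^p(\Omega;H)} < \infty$, and $jk \le T$ yields
\begin{equation*}
\norm{I_2^j}{L^p(\Omega;H)}^2 \le C \sum_{i=1}^j k \sum_{l > \kappa}\mu_l \le C \sum_{l>\kappa}\mu_l \le C \kappa^{1-\eta},
\end{equation*}
where the last inequality uses $\mu_l = C_\mu l^{-\eta}$ with $\eta > 1$. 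The coupling $\kappa \simeq h^{-\beta}$ with $\beta(\eta - 1) = 2(1+r)$ then turns $\kappa^{1-\eta}$ into $h^{2(1+r)} \simeq k$, so that $\norm{I_2^j}{L^p(\Omega;H)} \le C k^{1/2}$. For $I_1^j$ the Lipschitz property of $G$ from Assumption~\ref{assumptions:1} together with Burkholder--Davis--Gundy and Minkowski's inequality gives the self-referential bound $\norm{I_1^j}{L^p(\Omega;H)}^2 \le C \sum_{i=1}^j k\, \norm{e^{i-1}}{L^p(\Omega;H)}^2$. Adding the two estimates and invoking the discrete Gronwall inequality produces $\norm{e^j}{L^p(\Omega;H)}^2 \le C \kappa^{1-\eta} \le C k$, which is the desired $k^{1/2}$ rate.

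The main obstacle is supplying the uniform-in-$(k,h,\kappa)$ moment bounds $\sup_{i,k,h,\kappa}\norm{X^i_{\kappa,h}}{L^p(\Omega;H)} < \infty$ that enter the estimate of $I_2^j$ and running the Burkholder--Davis--Gundy estimate and the discrete Gronwall argument at the level of $L^p(\Omega;H)$ for general $p \ge 2$ rather than only $L^2(\Omega;H)$. These stability bounds should follow from the linear growth condition in Assumption~\ref{assumptions:1} and the fact that truncating the noise only decreases its covariance, essentially as in the stability analysis in~\cite{Kruse}; once they are in place, the remaining steps are the routine It\^o-isometry, Hilbert--Schmidt, and Gronwall computations sketched above.
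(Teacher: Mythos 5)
Your argument is sound, but it follows a genuinely different route from the paper's. The paper's proof is a one-line deferral to \cite[Theorem~4.3]{P15}: there the error $X(t_j)-X^j_{\kappa,h}$ is expanded directly, the noise truncation shows up inside one term of that expansion (the term $\textsc{II}_b$), and the only modification needed is to estimate that term via the pointwise bound $\norm{G_{\kappa,h}(s)e_j}{H}^2\le C(\norm{X^j_{\kappa,h}}{H}^2+1)$, i.e.\ condition (d) of Assumption~\ref{assumptions:1}\ref{assumptions:1:G}. You instead insert the intermediate full-noise discrete scheme $\tilde X^j_h$, quote \cite[Theorem~3.14]{Kruse} as a black box for $X(t_j)-\tilde X^j_h$, and reduce everything to a perturbation estimate between two discrete recursions, handled by contractivity of $(I-k\Delta_h)^{-1}$, the Burkholder--Davis--Gundy inequality, and a discrete Gronwall argument. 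The decisive ingredients are the same in both proofs: the tail bound $\sum_{l>\kappa}\mu_l\le C\kappa^{1-\eta}$, the coupling $\beta(\eta-1)=2(1+r)$ that converts $\kappa^{1-\eta}$ into $\Op(k)$, and Assumption~\ref{assumptions:1}\ref{assumptions:1:G}(d), which plays in your $I_2^j$ precisely the role it plays in the paper's $\textsc{II}_b$. What your route buys is modularity and elementary tools: no semigroup or mild-solution error analysis has to be repeated, and the truncation error is cleanly isolated from the discretization error. What the paper's route buys is economy: stability, the $L^p$ machinery, and the bookkeeping are inherited wholesale from \cite{P15}. Two details you should pin down to make your version complete. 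First, your triangle-inequality step needs \cite[Theorem~3.14]{Kruse} in $L^p(\Omega;H)$ for every $p\ge 2$, not merely $p=2$; check that the cited statement (or its proof) actually delivers this, since in the paper that is exactly what the appeal to \cite[Theorem~4.3]{P15} supplies. Second, the uniform moment bounds $\sup_{i}\norm{X^i_{\kappa,h}}{L^p(\Omega;H)}<\infty$ (uniform in $k$, $h$, $\kappa$), which you rightly flag as the main missing piece, do follow in the standard way from the linear growth of $G$ (Lipschitz continuity plus $G(0)\in\hsspacetwo{H_0}{H}$), contractivity of the discrete resolvent, and a Gronwall argument of the same type you already run; with those in place your sketch closes.
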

\begin{proof} 
The proof is the same as that of~\cite[Theorem~4.3]{P15} except that
\begin{equation*}
  \norm{G_{\kappa,h} (s) e_j}{H}^2 \le C( \norm{X^j_{\kappa,h}}{H}^2 + 1)
\end{equation*}
is used in the estimate of~$\textsc{II}_b$ in that proof, which does not change the final bound on~~$\textsc{II}_b$.
\end{proof}

While strong approximations are well understood and proven for the considered framework, weak convergence rates in the sense of bounds of the error
\begin{equation*}
 |\E[\phi(X_h^j)] - \E[\phi(X(t_j))]|,
\end{equation*}
where $\phi$ is a smooth functional, are still missing. Nevertheless, results on the convergence of approximations of SPDEs driven by additive noise (cf., e.g., \cite{AKL16,KLS15}) as well as semi-discrete approximations of either space (cf., e.g., \cite{Debussche,AL16,CJK14}) or time (cf., e.g., \cite{JK15}) suggest that the weak convergence rate is twice the strong one. This is in accordance with the results obtained in our simulations in Section~\ref{sec:simulation}, where we consider the case $\phi=\norm{\cdot}{H}^2$. Our choice of the test function~$\phi$ ensures that the weak error is bounded by the strong error (cf., \cite[Chapter 6]{Kruse} and \cite[Chapter 5]{P15}).

\section{Simulation}\label{sec:simulation}

In this section simulation results that combine the theory of Section~\ref{sec:MLMC-error} and Section~\ref{sec:SPDE} are presented, i.e., weak errors of an SPDE approximation are computed with Monte Carlo and multilevel Monte Carlo methods and compared. 
In the setting of Assumption~\ref{assumptions:1}, we now fix the parameters $T=1$, $X(0,x)=x-x^2$ and $C_\mu=\eta=5$ and consider two choices of the operator~$G: H \to \hsspacetwo{H_0}{H}$.

The first operator $G_1$ is defined for $v \in H$ and $v_0 \in H_0$ by
\begin{align*}
G_1(v) v_0 = \sum^\infty_{j=1} \inpro{v}{e_j}{H} \inpro{v_0}{e_j}{H} e_j,
\end{align*}
while the so called \emph{Nemytskii type} operator $G_2$ is defined for $x \in [0,1]$ by
\begin{equation*}
(G_2(v) v_0)[x] = \sin (v(x))v_0(x).
\end{equation*}
These operators satisfy Assumption~\ref{assumptions:1}\ref{assumptions:1:G} for $r \in [0, \infty)$ and $r \in [0,1/2)$ respectively, shown in~\cite[Section 6.4]{Kruse} and~\cite[Example 2.23]{Kruse}. 

The choice $G=G_1$ admits an analytical solution~$X$ of~\eqref{eq:stochasticheat} and for this the identities 
\begin{align}
\label{eq:G1exactsolution}
X(t) &= \sum^\infty_{j = 1}  \inpro{X_0}{e_j}{H} \exp\left(- ( \lambda_j + \frac{\mu_j}{2})t + \mu_j^\frac{1}{2} \beta_j (t) \right) e_j, 
\\
\label{eq:G1exactnorm}
\norm{X(t)}{H}^2 &= \sum^\infty_{j = 1} \inpro{X_0}{e_j}{H}^2 \exp\left(- ( 2 \lambda_j + \mu_j)t + 2 \mu_j^\frac{1}{2} \beta_j (t) \right),
\\
\label{eq:G1exactexpectednorm}
\E[{\norm{X(t)}{H}^2}] &= \sum^\infty_{j = 1} \inpro{X_0}{e_j}{H}^2 \exp\left( (- 2  \lambda_j + {\mu_j})t\right)
\end{align}
hold for all $t \in [0,T]$. The numerical approximations $X^{N_k}_{\kappa,h}$ of $X(T)$ are now computed by first setting $X^0_{\kappa,h} = I_h X_0$ and then recursively solving the numerical equation 
\begin{equation*}
X^j_{\kappa,h} - X^{j-1}_{\kappa,h} - k(\Delta_h X^j_{\kappa,h}) = G^h_i(X^{j-1}_{\kappa,h}) \left(W^\kappa(t_j)-W^\kappa(t_{j-1})\right) \text{ for } j=1,\ldots,N_k,
\end{equation*}
where the interpolation operator $I_h: H \to V_h$ is defined by 
\begin{equation*}
I_h f(x) = \sum^{N_h-1}_{j=1} f(x_j) \Phi_j (x) 
\end{equation*}
and for $G^h_i : V_h \to \hsspacetwo{H_0}{V_h}$, $i \in \{1,2\}$, we set
\begin{equation*}
G^h_1 (v_h) v_0 = \sum^\kappa_{j=1} \inpro{v_h}{I_h e_j}{H} \inpro{v_0}{e_j}{H} I_h e_j 
\end{equation*}
as well as
\begin{equation*}
G^h_2 (v_h) v_0 = I_h G_2(v_h) v_0.  
\end{equation*}
The replacement of the operator $P_h$ with $I_h$ mirrors to a large extent the setting of \cite[Chapter 6]{Kruse}, where the author notes that this is quite common in practice, and we note that the simulation results below indicate that the order of convergence is not affected. For refinement levels $\ell \in \N$ we set $k_\ell = 2^{-2 \ell}$ and, as a shorthand notation, we write $\hat{X}_\ell =X^{N_{k_\ell}}_{\kappa_\ell, h_\ell}$ for the end time evaluation of~\eqref{eq:truncatedfullydiscretesolution} with $h_\ell =k_\ell^{1/2}$ and $\kappa_\ell =k^{-1/2}_\ell$. 
With these choices, by Theorem~\ref{thm:noiseapproximation}, we expect a strong convergence rate of order~$1/2$ in time and by the usual rule of thumb that the weak rate of convergence is twice the strong one, we expect a weak rate of order~$1$.

The following simulations were performed on the Glenn cluster at Chalmers Centre for Computational Science and Engineering (C3SE) using the MATLAB Distributed Computing Server\texttrademark. In all of them, we approximate $\norm{v}{H}^2$ for $v \in H$ by $N_{h_\ell}^{-1} \sum^{N_{h_\ell}}_{j=1} v(x_j)^2$.

In Figure~\ref{P15fig2} an approximation of the strong error $\norm{X(T)-\hat{X}_\ell}{L^2(\gO;H)}$, i.e.,
\begin{equation*}
 E_N\left[\bignorm{X (T)-\hat{X}_\ell (T)}{H}^2\right]^{\frac{1}{2}},
\end{equation*}
is calculated for levels $\ell = 1,\ldots, 7$, where we replace the exact solution $X$ with a \emph{reference solution} $\tilde{X}_L$. 
For $G=G_1$ the reference solution $\tilde{X}_L$ is given by~\eqref{eq:G1exactsolution} truncated at $j = \kappa_L=2^L$, while for $G=G_2$ we let $\tilde{X}_L = \hat{X}_L$, since we do not have access to an analytic solution. We let $L=8$ and take $N=12 \cdot 10^3$ samples. 
It should be noted that the same realizations of the $N$ $Q$-Wiener processes are used for the error computations on all levels. The observed error rate is asymptotically $\Op(k^{1/2})$ and therefore consistent with Theorem~\ref{thm:noiseapproximation}.

\begin{figure}[ht]
   \centering
     \subfigure[Strong error for $\ell=1,\ldots,7$ with $N=12 \cdot 10^3$ samples.\label{P15fig2}]{\includegraphics[width=0.49\textwidth]{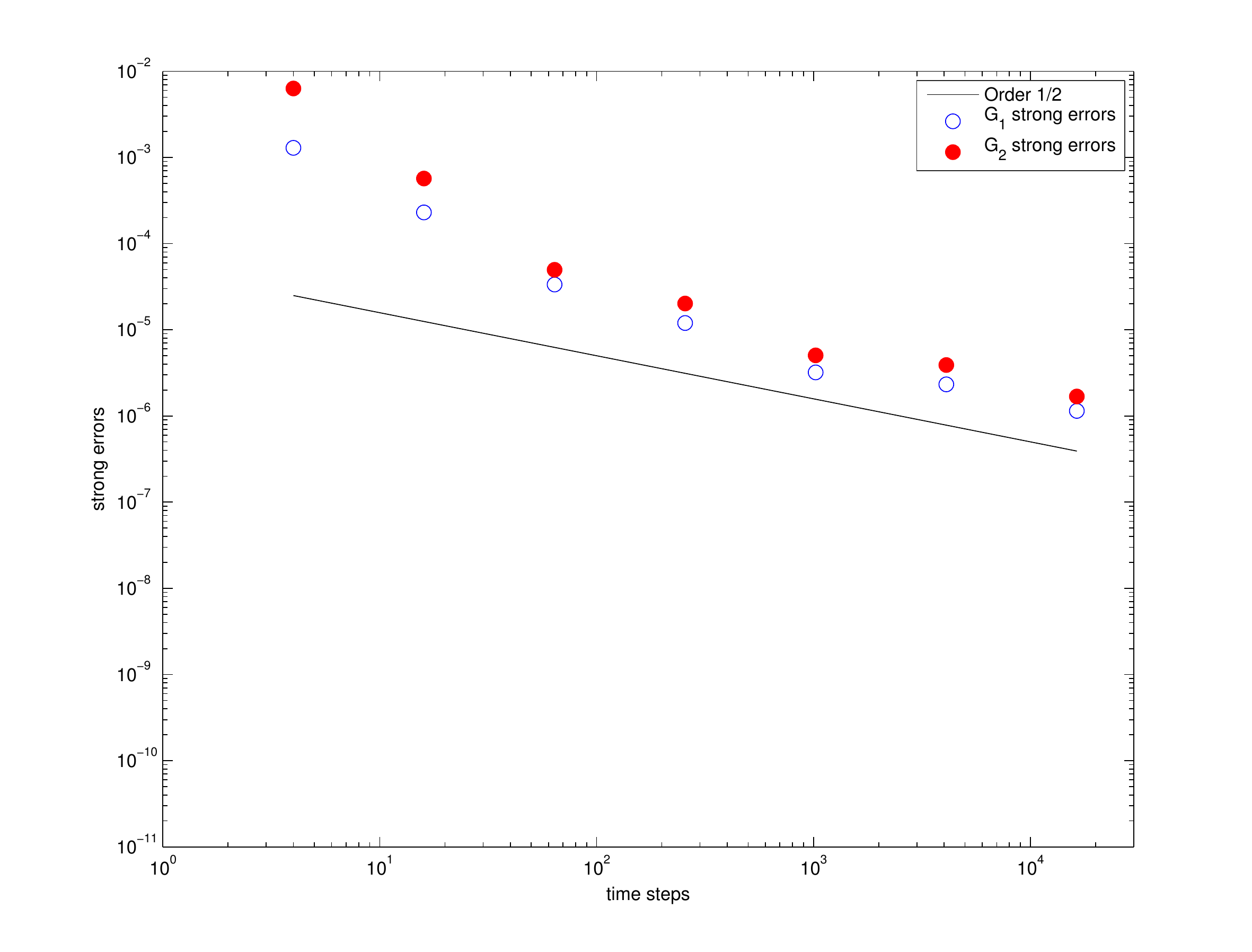}} 
 \subfigure[Weak error for $\ell=1,\ldots,5$ with $N=3 \cdot 10^3$ samples. \label{P15fig4}]{\includegraphics[width=0.49\textwidth]{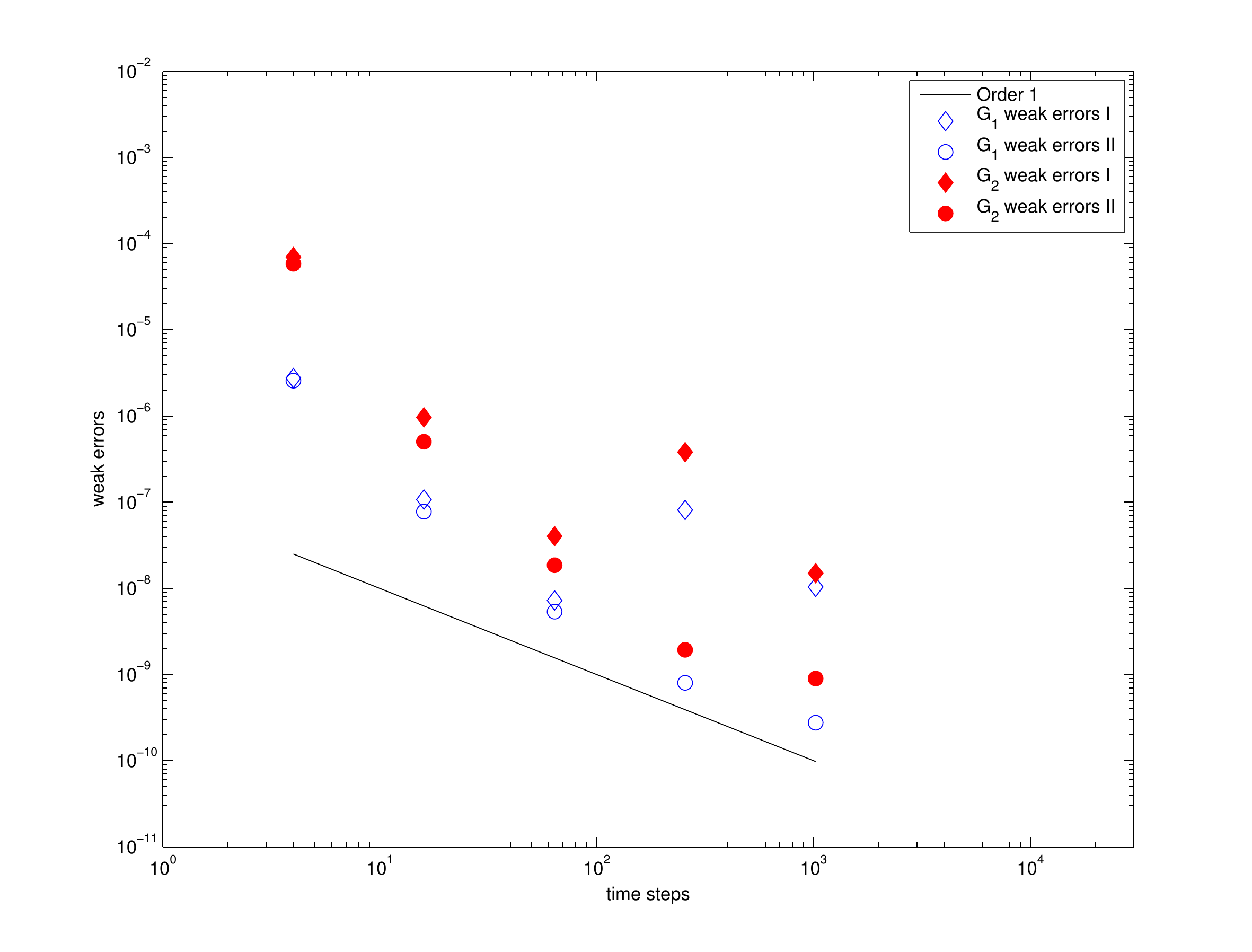}}
 \caption{Approximations of strong and weak errors using singlelevel Monte Carlo estimators.}
\end{figure}

Next, we estimate the weak error and compare the performance of the singlelevel Monte Carlo estimators of Propositions~\ref{prop:MC-simple-error-bound} and \ref{prop:MC-all-error-bound}. In Figure~\ref{P15fig4} the weak error is approximated with $N=3 \cdot 10^3$ samples and $\ell = 1,\ldots, 5$. For the weak error approximation according to Proposition~\ref{prop:MC-simple-error-bound} with $Y=\norm{X(T)}{H}^2$ and $Y_\ell = \norm{\hat{X}_\ell}{H}^2$ we set
\begin{equation*}
 e_{1,\ell} = \left| \bigE{\norm{X(T)}{H}^2}-E_N\left[ \norm{\hat{X}_\ell}{H}^2\right] \right|,
\end{equation*}
which we refer to as \emph{error of type~I} in what follows. Here we calculate $E_N\left[ \norm{\hat{X}_\ell}{H}^2\right]$,  $\ell = 1,\ldots, 5$, using separate sets of realizations of $N$ $Q$-Wiener processes for each level $\ell$. Furthermore, we replace $\bigE{\norm{X(T)}{H}^2}$ by \eqref{eq:G1exactexpectednorm} evaluated at $t=T$ and truncated at $j=10^6$ in the case of $G=G_1$. In the case of $G=G_2$, we replace it instead by a reference solution $E_N \left[ \norm{\hat{X}_L}{H}^2\right]$ with $L=8$ and $N=10^4$, which is calculated on an independent set of $Q$-Wiener processes. For the weak error approximation according to Proposition~\ref{prop:MC-all-error-bound}
\begin{equation*}
 e_{2,\ell} = \left|E_N \left[ \norm{X(T)}{H}^2-\norm{\hat{X}_\ell}{H}^2 \right] \right|,
\end{equation*}
 called \emph{error of type~{II}} in what follows, the samples of $\|X(T)\|_H^2-\|\hat{X}_\ell\|_H^2$, $\ell = 1,\ldots, 5$, are computed on the same set of $N$ $Q$-Wiener processes. In the case of $G=G_1$, we replace the exact solution $\|X(T)\|_H^2$ with \eqref{eq:G1exactnorm} evaluated at $t=T$ and truncated at $j=h_L^{-1}$. For $G=G_2$ we use again a reference solution $\norm{\hat{X}_L}{H}^2$. 
 In said figure, i.e., Figure~\ref{P15fig4}, we show the average of these estimators
 \begin{equation*}
   e^M_{i,\ell} = M^{-1}\sum^M_{j=1} e^j_{i, \ell}
 \end{equation*}
for $i=1,2$, where $(e^j_{i, \ell})^M_{j=1}$ is a sequence of independent copies of $e_{i, \ell}$ and $M=10$, to see how they perform in general.
While the errors of type~{II} supersede and then approach twice the strong order of convergence, the errors of type~I do not. This is due to the limitation of the convergence by the number of Monte Carlo samples from below as shown in Proposition~\ref{prop:MC-simple-error-bound}, that is to say, with a constant sample size $N$ we get a sampling error proportional to $\Var(\|\hat{X}_\ell\|_H^2)$. This indicates that the observation of weak convergence results with a naive Monte Carlo estimator cannot be computed satisfactory in an acceptable time even for such a relatively easy example, where details on the computational times are collected for all estimators at the end of the example. 

For the errors of type~{II} the rate of convergence seems to decrease for the last level. This is explained by Proposition~\ref{prop:MC-all-error-bound}---in contrast to the the type~{I} errors the sampling error is proportional to ${\Var(\|X(T)\|_H^2-\| \hat{X}_\ell\|_H^2)^{1/2}}$ which is bounded by the strong error (measured in~$L^4(\gO;\R)$) and therefore the rate of convergence starts to resemble $\Op(k^{1/2})$.

For the last set of simulations in Figure~\ref{P15fig6} we compare singlelevel type~I errors to type~I weak errors obtained using the multilevel Monte Carlo estimator~\eqref{eq:MLMC-estimator} instead of the naive Monte Carlo approximation~\eqref{eq:MC-estimator}. For $\ell \in \N_0$ we set $k_\ell = k_0 2^{-2 \ell}, h_\ell = k_\ell^{1/2}, \kappa_\ell=k^{-1/2}_\ell$ to obtain a series of fully discrete approximations $\check{X}_\ell=X^{N_{k_\ell}}_{\kappa_\ell, h_\ell}$, where for computational reasons we let $k_0 = 2^{-2}$. In the errors 
\begin{equation*}
  \hat{e}_L =\left| \bigE{\norm{X(T)}{H}^2}-E^L\left[ \norm{\check{X}_L}{H}^2\right] \right|,
\end{equation*}
we replace $\smallE{\norm{X(T)}{H}^2}$ by the same quantities as for the type I errors in Figure~\ref{P15fig4}. We note that each multilevel estimate $E^L\left[ \norm{\check{X}_L}{H}^2\right]$ is generated independently of one another so that the type~{I} errors become a natural comparison. 
Figure~\ref{P15fig6} shows the multilevel error approximations for $L = 1,\ldots, 5$ and the corresponding singlelevel errors of type I with sample sizes $N=3 \cdot 10^3$. We again show an average of these and let $M=100$.
From the figure we observe that the multilevel Monte Carlo estimators show the weak convergence rate, while the errors of type~I fail. The latter errors are dominated by the sampling error as before, but for the multilevel estimator we know from Corollary~\ref{cor:MLMC-simple-error-bound-with-samples} that the sampling error is bounded from above and below by the weak error. This explains why this approach succeeds in showing the expected weak convergence rates. The fact that the errors of type~I are in total smaller than those obtained by the multilevel Monte Carlo simulation is due to the larger constant in the error estimates of Corollary~\ref{cor:MLMC-simple-error-bound-with-samples} which can be reduced by enlarging the overall number of samples in the multilevel Monte Carlo method.

\begin{figure}[ht]
   \centering
     \includegraphics[width=0.75\textwidth]{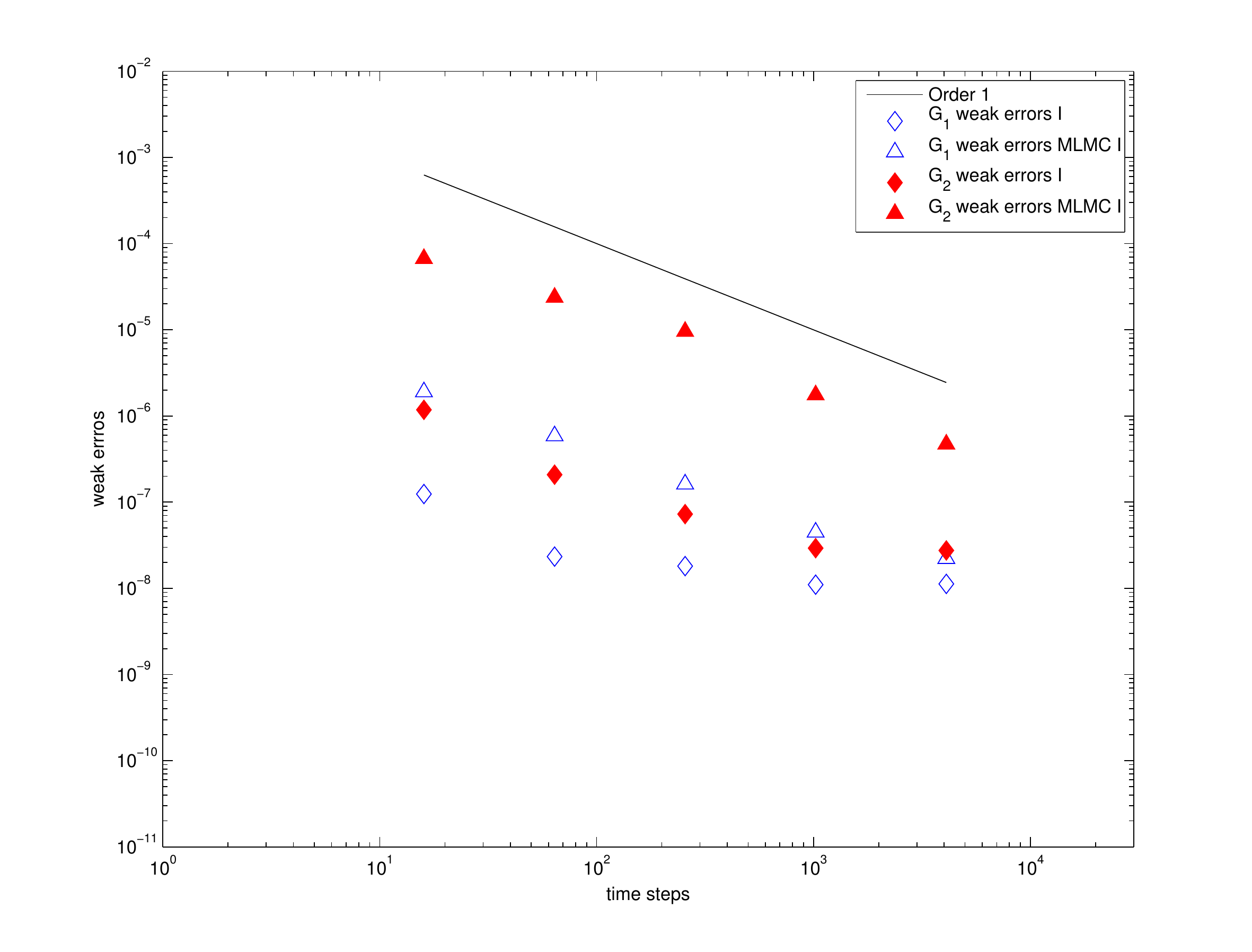}
\caption{Average of $100$ realizations of MLMC computed at levels $L=1,\ldots,5$ with the corresponding MC estimates of the weak error.} 
 \label{P15fig6}
\end{figure}

To give the reader an idea of the computational complexity of the shown convergence plots, we include the computing times, rounded off to the nearest hour, for 8~computing nodes with a total of 128~cores on the Glenn cluster of C3SE. The strong error plot Figure~\ref{P15fig2} cost 40~hours, while the weak error simulation of type~{II} in Figure~\ref{P15fig4} took 102~hours. The reference solution used for the weak singlelevel errors of type~I and the multilevel Monte Carlo errors for $G=G_2$ were computed in 13~hours. The costs for the weak errors of type I in Figure~\ref{P15fig4} are negligible (i.e., less than one hour). The computation of the multilevel errors in Figure~\ref{P15fig6} took 32~hours, while it took just 2~hours for the singlelevel errors. It is important to note that the computation of the type~I errors was quite cheap since we could reuse the reference solution for $G=G_2$. One should also be aware that we would have needed to increase the number of samples by at least a factor of $2^{12}$ to see the weak convergence for the type~{I} error of $G=G_1$, which would have increased the computational time to more than 8000~hours.

In conclusion we have seen in this section that the simulation of weak errors of SPDE approximations causes severe problems which we already expected out of the theory in Section~\ref{sec:MLMC-error}. The use of a multilevel Monte Carlo estimator and a modified Monte Carlo estimator finally led to the expected weak convergence plots due to a faster convergence of the sampling error caused by the approximation of the expectation shown theoretically in Section~\ref{sec:MLMC-error}. It is important to point out at this point here that these are to our knowledge the first successful simulations of weak errors for SPDEs driven by multiplicative noise. 

Due to the limitations in computational complexity, we further illustrate the theoretical results of Section~\ref{sec:MLMC-error} with the simulation of an ordinary stochastic differential equation. The relative cheapness of such a simulation allows us to make the consequences of Section~\ref{sec:MLMC-error} even clearer than above. Let us therefore consider in what follows the easy example of a geometric Brownian motion in one dimension, i.e., the stochastic differential equation
\begin{equation}\label{eq:gBM}
 \dd X(t) = \mu X(t) \, \dd t + \sigma X(t) \, \dd W(t)
\end{equation}
with initial condition $X(0) = X_0 \in \R$ and $t \in [0,T]$, where $\mu, \sigma \in \R$ and $W$ denotes a one-dimensional Brownian motion. The solution to the geometric Brownian motion is known to be
\begin{equation*}
 X(t) = X_0 \exp((\mu - \gs^2/2)t + \gs W(t) )
\end{equation*}
and the second moment can be computed explicitly to be
\begin{equation*}
 \E[|X(t)|^2] = X_0^2 \exp( (2\mu + \gs^2)t).
\end{equation*}
For the approximation let us consider an equidistant time discretization $(t_j)_{j=1}^{N_k}$ with time step size $k \in (0,1]$ by $t_j = jk$ for $j=0,1, \ldots, N_k$, where $N_k = T/k$ is assumed to be an integer. The Euler--Maruyama scheme is then given by the recursion
\begin{equation*}
 X^{j} = (1 + k \mu + \gs(W(t_j) - W(t_{j-1}))) X^{j-1},
\end{equation*}
and $X^0 = X_0$, where $X^j$ denotes the approximation of $X(t_j)$.
It is known that this scheme converges for the geometric Brownian motion with strong order $\Op(k^{1/2})$, i.e.,
\begin{equation*}
 \E[|X(t_j) - X^j|^2]^{1/2} \le C k^{1/2},
\end{equation*}
and with weak order $\Op(k)$, i.e., for sufficiently smooth test functions $\phi:\R \rightarrow \R$ it holds that
\begin{equation*}
 |\E[\phi(X(t_j)) - \phi(X^j)]| \le C k,
\end{equation*}
where $j=0,1, \ldots, N_k$. Here the constant~$C$ does not depend on~$k$.

In the simulation of the four estimators from Section~\ref{sec:MLMC-error}, let us consider the geometric Brownian motion~\eqref{eq:gBM} with $\mu = -0.5$, $\gs = 1$, $X_0 = 1$, and $T = 0.5$. Furthermore, let $N = 10^{4.5}$ be the number of samples in the Monte Carlo estimator and $M = 20$ in the notation of the previous example with the same estimators as before . Then we obtain on time grids with $2^j + 1$ grid points for $j=1,\ldots,8$ the convergence plots for the simulated strong and weak errors which are presented in Figure~\ref{fig:gBM}.
\begin{figure}[ht]
   \centering
     \subfigure[Strong error for $\ell=1,\ldots,8$.\label{fig:gBM_strong}]{\includegraphics[width=0.49\textwidth]{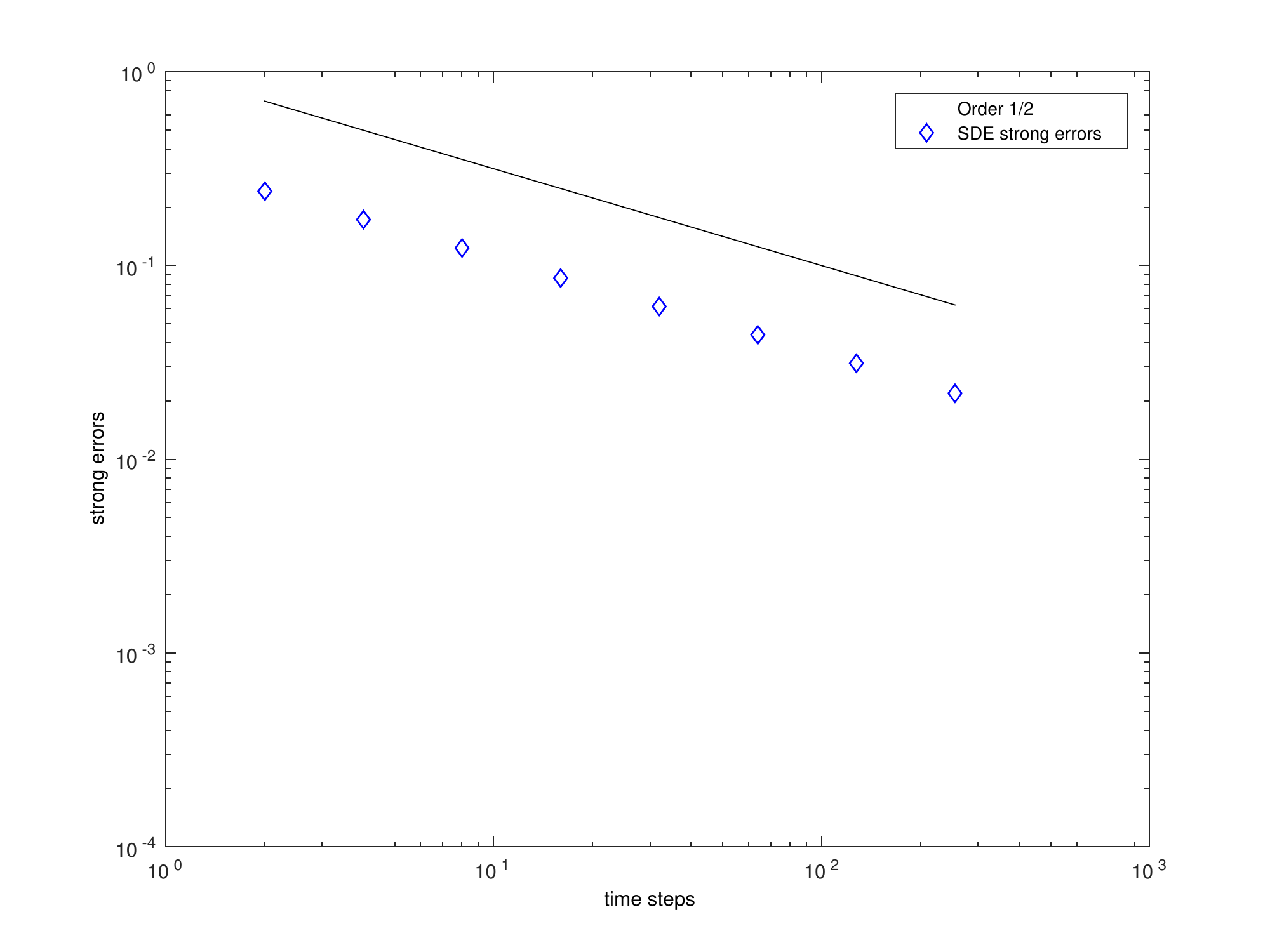}} 
 \subfigure[Weak error for $\ell=1,\ldots,8$. \label{fig:gBM_weak}]{\includegraphics[width=0.49\textwidth]{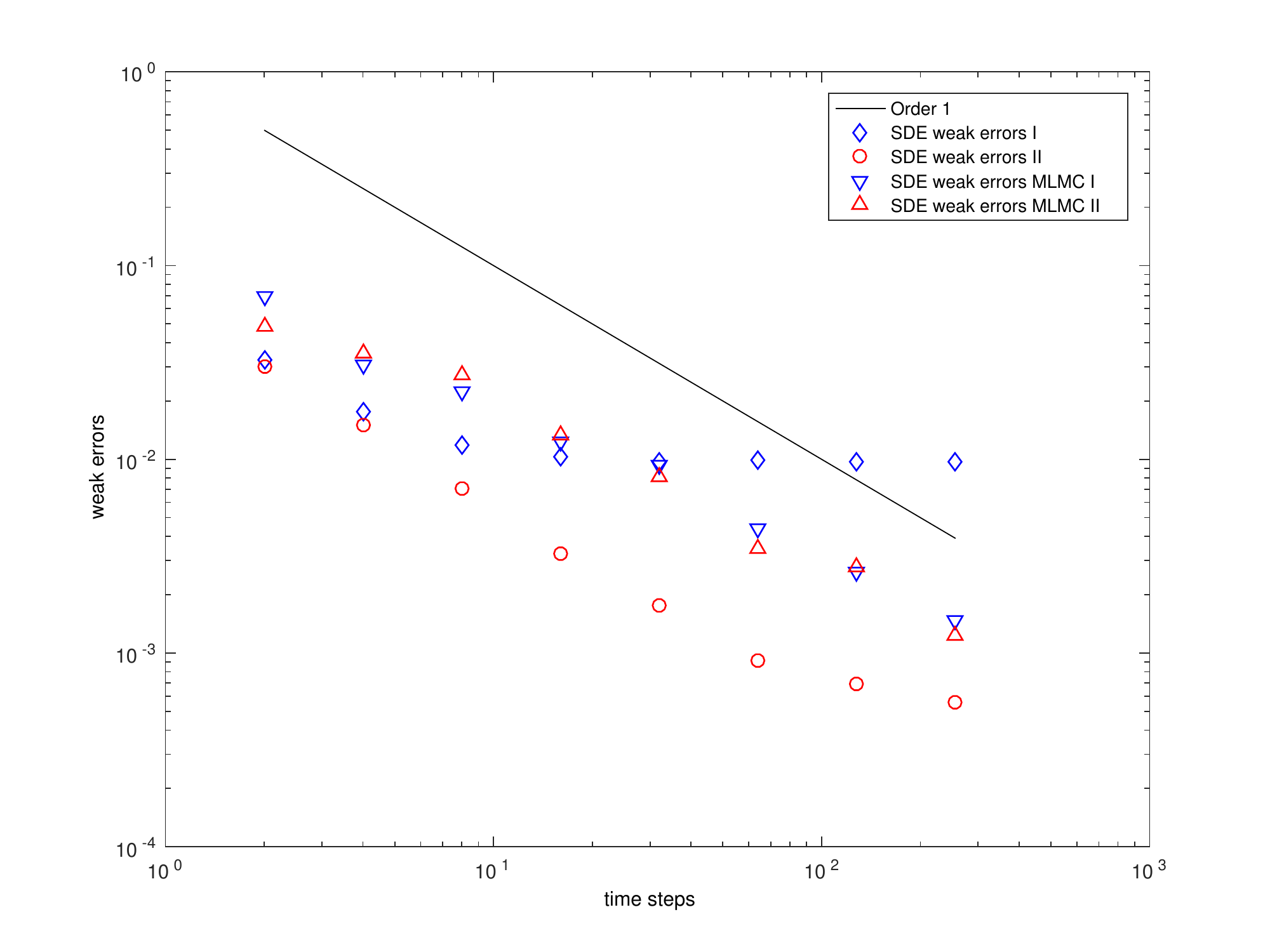}} 
 \caption{Approximations of strong and weak errors of the Euler--Maruyama scheme for the geometric Brownian motion.\label{fig:gBM}}
\end{figure}
In the weak error simulation, the function $\phi(x) = |x|^2$ is used. We observe that the strong error in Figure~\ref{fig:gBM_strong} converges as expected with $\Op(k^{1/2})$. In Figure~\ref{fig:gBM_weak} one sees that the type~I estimator of Proposition~\ref{prop:MC-simple-error-bound} $|\E[|X(T)|^2] - E_N[|X^{N_k}|^2]|$, which just does a Monte Carlo simulation on the approximate solution, only converges on the first two grid points with the desired order before as in the theory the Monte Carlo error dominates. At the same time, the type~{II} estimator $|E_N[|X(T)|^2 - |X^{N_k}|^2]|$, which was considered in Proposition~\ref{prop:MC-all-error-bound}, behaves a lot better. It converges with the desired order up to the last two points, where the strong order of convergence dominates as predicted by the theory. Both the multilevel Monte Carlo estimator of type~{I} from Proposition~\ref{prop:MLMC-simple-error-bound} and of type~{II} from Proposition~\ref{prop:MLMC-all-error-bound} converge with the desired order of convergence but the absolute errors are larger due to the larger constant in the overall error. This easy example, where all correct values were known and could be used for the computations of the exact solutions, shows clearly the behaviour that we expected from the theoretical upper and lower bounds on weak error estimators in Section~\ref{sec:MLMC-error}.

\bibliographystyle{hplain}
\bibliography{MCM-Proceedings-update}

\end{document}